\newtheorem{theorem}{Theorem}[section]
\newtheorem{lemma}[theorem]{Lemma}
\newtheorem{proposition}[theorem]{Proposition}
\newtheorem{corollary}[theorem]{Corollary}
\newtheorem{definition}[theorem]{Definition}
\newtheorem{remark}[theorem]{Remark}
\numberwithin{equation}{section}
\newcommand{\R}{\mathbb{R}}
\newcommand{\Q}{\mathbb{Q}}
\renewcommand{\P}{\mathbb{P}}
\newcommand{\CP}{\mathbb{CP}}
\newcommand{\OO}{\mathcal{O}}
\renewcommand{\d}{\partial}
\newcommand{\dbar}{\overline{\partial}}
\newcommand{\e}{\varepsilon}
\DeclareMathOperator{\Ric}{Ric}
\DeclareMathOperator{\Kah}{Kah}
\DeclareMathOperator{\PSH}{PSH}
\newcommand{\oo}[1]{\overline{#1}}
\newcommand{\ip}[1]{\left\langle#1\right\rangle}
\newcommand{\abs}[1]{\left|#1\right|}
\title{Fano Fibrations and Twisted K\"ahler-Einstein Metrics I}
\author{Alexander Bednarek\\The University of Sydney\\\texttt{alexander.bednarek@sydney.edu.au}}
\date{\today}
\begin{document}

\maketitle

\begin{abstract}
\noindent This is the first of two papers studying both the geometric structure of Fano fibrations and the application to K\"ahler-Ricci flows developing a singularity in finite time. Given a Fano fibration which is generated by Kawamata's theorem from a compact K\"ahler manifold $X$ endowed with an ample, rational line bundle $L$ and non-nef canonical line bundle $K_X$, we construct a $(1,1)$-form on the regular part of the base analytic variety which is related to the Weil-Petersson metric. It is also proven that the singular K\"ahler metric constructed by Zhang, Zhang, on the base analytic variety satisfies a twisted K\"ahler-Einstein equation involving this $(1,1)$-form and, for a submersion, that the Chern classes of $X$ and the base manifold decompose in terms of this $(1,1)$-form.
\end{abstract}

\section{Introduction}

\noindent Fano fibrations are deeply connected with finite time singularities of the K\"ahler-Ricci flow. When the K\"ahler-Ricci flow is run on a compact K\"ahler manifold, whether or not it develops a singularity in a finite time is a condition solely dependent on the first Chern class. If a finite time singularity is encountered and the K\"ahler manifold is equipped with rational, ample, line bundle, or equivalently a rational, initial metric then the manifold naturally admits the structure of a Fano fibration over an analytic variety in projective space.\\
The Analytic Minimal Model Program proposed by Song and Tian, \cite{BEG13,JST23,ST07,ST12}, puts this geometric picture into an larger scheme to classify compact K\"ahler manifolds using the K\"ahler-Ricci flow. One behaviour that the Analytic Minimal Model Program predicts is that the K\"ahler-Ricci flow collapses the Fano fibres to points and the manifold itself collapses onto the base space of the fibration \cite{JST23}. This paper studies the geometry of Fano fibrations in an attempt to understand the right model for this collapsing behaviour of K\"ahler-Ricci flow. In particular, we are interested in forms related to the deformation theory of the complex structures of the Fano fibres, and the existence of canonical singular K\"ahler metrics on the base space of the fibration. In the second paper, \cite{B25}, of this two part series, we study the applications of some of the results of this paper to the collapsing behaviour predicted for the K\"ahler-Ricci flow by the Analytic Minimal Model Program.\\

\noindent We will assume $(X,\omega_0)$ is a compact K\"ahler manifold where $\omega_0$ is a K\"ahler metric satisfying $[\omega_0] = 2\pi c_1(L)$ for some rational line bundle $L$ and assume that $K_X$ is not nef. Then Kawamata's basepoint free theorem generates a Fano fibration, i.e., a proper, holomorphic map with connected fibres
\begin{equation*}
    f: X \to B \subseteq \CP^N
\end{equation*}
where $B$ is an irreducible, normal, analytic variety. Let us denote the singular part of the variety together with the critical points of the map $f$ by $S'$, and the smooth Fano fibres by $X_b := f^{-1}(b)$ for all $b \in B \backslash S'$. Also set $\lambda$ to be the positive constant such that $2 \pi c_1(X_b) = \lambda [\omega_{0,b}]$ where we denote $\omega_{0,b} = \omega_0|_{X_b}$. We also assume that the fibres are of intermediate dimension, i.e. $0 < \dim B < \dim X$.\\

\noindent The fibration naturally gives us a family of diffeomorphic K\"ahler manifolds with varying complex structures. The Kodaira-Spencer map $\rho_b: T^{1,0}_b B \to H^{1}(X_b,T^{1,0}X_b)$ encapsulates this deformation of complex structure for a fixed $b \in B \backslash S'$, \cite{H05,K86,V02,T89}, and is used to define the Weil-Petersson metric. In particular, the Weil-Petersson metric is defined as the $L^2$-inner product of harmonic forms representing the image of the Kodaira-Spencer map, \cite{FS90, T87}.\\ 
Tian's result, \cite[Thm. 2]{T87}, for the Weil-Petersson metric in the deformation theory for Calabi-Yau manifolds states that the Weil-Petersson metric can be locally expressed as
\begin{equation*}
    \omega_{WP} = -i\d\dbar\log\left(\int_{X_b} \Omega_{\Psi_b}\right)
\end{equation*}
using a local family of volume forms $\Omega_{\Psi_b}$ on $X_b$ with $\Ric \Omega_{\Psi_b} = 0$. This motivates the following definition.

\begin{theorem}
There exists a $(1,1)$-form $\omega_{WP,\lambda}$ on $B \backslash S'$ defined locally in the open set $U \subseteq B \backslash S'$ by
\begin{equation*}
    \omega_{WP,\lambda}|_{U} = -i\d\dbar \log \left(\int_{X_b} \Omega_{\Psi_b}\right),
\end{equation*}
where $\Omega_{\Psi_b}$ is a local family of volume forms on $X_b$ over $U$ satisfying $\Ric \Omega_{\Psi_b} = \lambda \omega_{0,b}$.
\end{theorem}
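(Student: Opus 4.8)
The plan is to build the family $\Omega_{\Psi_b}$ not as an arbitrary family of fibre volume forms with the prescribed Ricci curvature — that equation pins $\Omega_{\Psi_b}$ down on each fibre only up to a positive constant, so it does not by itself give a globally well-defined form — but, exactly as in Tian's Calabi--Yau construction \cite{T87}, out of a genuinely holomorphic object on the base: a local holomorphic frame of a line bundle assembled from the relative anticanonical sheaf. The Ricci normalisation will then come out automatically, and the only remaining freedom will be to multiply $\int_{X_b}\Omega_{\Psi_b}$ by $\abs{g(b)}^{2/m}$ for a local nowhere-vanishing holomorphic function $g$ on the base; since $\log\abs{g}^2$ is pluriharmonic, the locally defined forms $-i\d\dbar\log\int_{X_b}\Omega_{\Psi_b}$ will agree on overlaps and patch to a global $(1,1)$-form.

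First I would set up the relative line bundle. Since $[\omega_{0,b}]=2\pi c_1(L|_{X_b})$ is a nonzero integral class and $2\pi c_1(X_b)=\lambda[\omega_{0,b}]$ is integral, $\lambda$ is a positive rational; fix $m\in\N$ with $\lambda m\in\N$. Each fibre $X_b$ ($b\in B\backslash S'$) is a smooth Fano manifold, so $\Pic(X_b)$ is torsion-free (indeed $H^1(X_b,\OO_{X_b})=0$ and $X_b$ is simply connected), while $c_1(K_{X_b}^{\otimes m}\otimes L^{\otimes\lambda m}|_{X_b})=m\,c_1(K_{X_b})+\lambda m\,c_1(L|_{X_b})$ vanishes in $H^2(X_b,\R)$ and hence, being integral, in $H^2(X_b,\Z)$; therefore $K_{X_b}^{\otimes m}\otimes L^{\otimes\lambda m}|_{X_b}\isom\OO_{X_b}$, so it has one-dimensional space of sections and vanishing first cohomology. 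Over $B\backslash S'$ the map $f$ is a proper holomorphic submersion between complex manifolds, so by Grauert's direct-image theorem together with cohomology and base change the sheaf
\[
\mathcal{L} := f_*\left(K_{X/B}^{\otimes m}\otimes L^{\otimes\lambda m}\right)
\]
is a holomorphic line bundle on $B\backslash S'$ whose fibre at $b$ is $H^0(X_b,K_{X_b}^{\otimes m}\otimes L^{\otimes\lambda m}|_{X_b})\isom\C$.

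Next comes the local construction and the Ricci identity. Fix a Hermitian metric $h_L$ on $L$ with $-i\d\dbar\log\abs{e_L}^2_{h_L}=\omega_0$ in any local frame $e_L$ (unique up to a positive constant, as $X$ is compact and connected). On an open $U\subseteq B\backslash S'$ over which $\mathcal{L}$ is trivial, choose a holomorphic frame $\Psi$ of $\mathcal{L}|_U$ — equivalently a nowhere-vanishing holomorphic section of $K_{X/B}^{\otimes m}\otimes L^{\otimes\lambda m}$ on $f^{-1}(U)$ — set $\Psi_b:=\Psi|_{X_b}$, and define $\Omega_{\Psi_b}:=\abs{\Psi_b}^{2/m}_{h_L^{\otimes\lambda m}}$: pair $\Psi_b$ with its conjugate, use $h_L^{\otimes\lambda m}$ to pair the two $L^{\otimes\lambda m}$-factors so as to get a positive section of $(K_{X_b}\otimes\overline{K_{X_b}})^{\otimes m}$, and take its $m$-th root. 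In a holomorphic chart $z$ on $X_b$ and a frame $e_L$ of $L$ this reads $\Omega_{\Psi_b}=\abs{\psi}^{2/m}\abs{e_L}^{2\lambda}_{h_L}\,i^{n^2}dz^1\wedge d\bar z^1\wedge\cdots\wedge dz^n\wedge d\bar z^n$ with $\psi$ holomorphic and nowhere zero; this is independent of the chart and frame, is a genuine (positive) volume form on $X_b$, and depends smoothly on $b$. Since $\log\abs{\psi}^2$ is pluriharmonic, taking $-i\d\dbar$ of the log of the density gives $\Ric\Omega_{\Psi_b}=-\tfrac1m\,i\d\dbar\log\abs{\psi}^2+\lambda\bigl(-i\d\dbar\log\abs{e_L}^2_{h_L}\bigr)=\lambda\,\omega_{0,b}$, as required.

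Finally, well-definedness and gluing. Two holomorphic frames of $\mathcal{L}$ over an intersection of such sets $U$ differ by $g\in\OO^*(U)$, which is constant along each fibre $X_b$; hence $\Omega_{\Psi'_b}=\abs{g(b)}^{2/m}\Omega_{\Psi_b}$, and since $b\mapsto\int_{X_b}\Omega_{\Psi_b}$ is a positive smooth function (differentiation under the fibre integral, using that $f$ is a proper submersion over $B\backslash S'$), $\int_{X_b}\Omega_{\Psi'_b}=\abs{g(b)}^{2/m}\int_{X_b}\Omega_{\Psi_b}$; as $\log\abs{g}^2$ is pluriharmonic, $-i\d\dbar\log\int_{X_b}\Omega_{\Psi'_b}=-i\d\dbar\log\int_{X_b}\Omega_{\Psi_b}$, and rescaling $h_L$ by a constant changes the integral by a global constant with no effect either. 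Thus the locally defined real $(1,1)$-forms agree on overlaps and patch to a global $(1,1)$-form $\omega_{WP,\lambda}$ on $B\backslash S'$. I expect the crux to be the construction of $\mathcal{L}$: promoting the numerical relation $c_1(-K_{X_b})=\lambda\,c_1(L|_{X_b})$ to an actual isomorphism of line bundles on each fibre after passing to a multiple — this is where torsion-freeness of $\Pic(X_b)$ for Fano $X_b$ is essential — and then checking that the direct image is locally free of rank one over \emph{all} of $B\backslash S'$; once $\mathcal{L}$ and $h_L$ are fixed, the Ricci identity is the one-line computation above and the gluing is the standard pluriharmonicity argument.
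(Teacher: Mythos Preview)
Your proposal is correct and essentially matches the paper's construction in Section~3: both form the direct-image line bundle $f_*(K_{X/B}^{\,m}\otimes L^{\lambda m})$ (the paper writes this as $f_*D_{X/B}^{k'}$, where $D_{X/B}=\lambda L+K_{X/B}$ and $k'=\beta$ plays the role of your $m$), take a local holomorphic frame $\Psi$, contract the $L$-factor against $h_L$ and take the $m$-th root to obtain $\Omega_{\Psi_b}$, verify $\Ric\Omega_{\Psi_b}=\lambda\omega_{0,b}$ by the one-line computation, and glue via pluriharmonicity of $\log\abs{g}^2$. The only difference is a single step: the paper reads off triviality of the fibre restriction directly from $kD=f^*\OO_B(1)$, whereas you argue more intrinsically via torsion-freeness of $\Pic(X_b)$ for Fano $X_b$.
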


\noindent It is known due to the work of Song and Tian, \cite{ST07, ST12,T19}, that given a Calabi-Yau fibration, there exists a singular K\"ahler metric $\omega_B$ on the base analytic variety which satisfies the twisted K\"ahler-Einstein equation
\begin{equation*}
    \Ric \omega_B = -\omega_B + \omega_{WP}
\end{equation*}
on $B\backslash S'$. Under the assumption of the abundance conjecture, this geometric structure always appears in the case that the K\"ahler-Ricci flow develops a singularity after infinite time. Tosatti, Weinkove, Yang, \cite{TWY18,T19}, prove that the K\"ahler-Ricci flow converges in the $C^0_{loc}(X\backslash S)$-topology to the pullback of this twisted K\"ahler-Einstein metric $\omega_B$. Moreover, it was recently proven by Zhang, Zhang, that one can use the argument of Song and Tian in \cite{ST12} to construct a singular K\"ahler metric on the base analytic variety of the Fano fibration, \cite{ZZ20}. Furthermore, the continuity method also converges in the $C^0_{loc}(X\backslash S)$-topology to the pullback of this singular K\"ahler-metric, \cite{ZZ20}. The convergence in the setting of the K\"ahler-Ricci flow will be discussed in the second paper, \cite{B25}. Below, we prove that the metric of Zhang and Zhang satisfies a twisted K\"ahler-Einstein equation involving $\omega_{WP,\lambda}$.

\begin{theorem}
For the Fano fibration $f:X \to B$, the base space metric $\omega_B$ of Zhang, Zhang, \cite{ZZ20}, satisfies the following twisted K\"ahler-Einstein equation
\begin{equation*}
    \Ric \omega_{B} = -\omega_B -\lambda \eta +\omega_{WP,\lambda}
\end{equation*}
on $B \backslash S'$, where $\eta$ is a multiple of the Fubini-Study metric. Furthermore, there exists a metric on the base space, $\omega_{B}'$, satisfying the following twisted K\"ahler-Einstein equation
\begin{equation*}
    \Ric \omega_B' = -\omega_B' + \omega_{WP,\lambda}
\end{equation*}
on $B \backslash S'$. The K\"ahler metrics $\omega_B$ and $\omega_B'$ are both smooth on the $B\backslash S'$ and continuous on $B$.
\end{theorem}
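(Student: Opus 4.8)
The plan is to extract the twisted K\"ahler--Einstein equation for $\omega_B$ from the complex Monge--Amp\`ere equation that defines it in \cite{ZZ20}, and then to obtain $\omega_B'$ by solving a companion Monge--Amp\`ere equation in an enlarged K\"ahler class on $B$. Recall that the metric of Zhang and Zhang descends to $B$ and is built there as $\omega_B = \chi + i\d\dbar\phi$, where $\chi$ is a smooth reference K\"ahler form in the relevant class and $\phi$ is the bounded solution --- continuous on $B$, smooth on $B\backslash S'$ --- of a degenerate Monge--Amp\`ere equation of Aubin type
\begin{equation*}
    (\chi + i\d\dbar\phi)^{n} = e^{\phi}\,\Omega_B, \qquad n = \dim_{\C} B ,
\end{equation*}
where the measure $\Omega_B$ on $B$ is a fibrewise pushforward $\Omega_B = f_{*}\Omega_X$ of an adapted volume form $\Omega_X$ on $X$. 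Following \cite{ST12,ZZ20}, one may choose $\Omega_X$ so that its fibrewise restriction, read as a relative volume form, is a local family $\Omega_{\Psi_b}$ as in Theorem~1.1 (that is, with $\Ric\Omega_{\Psi_b} = \lambda\,\omega_{0,b}$), and so that $\Omega_X$ is adapted, with a fixed reference metric, to the line bundle pulled back from $B$ that governs the construction; then $\Ric\Omega_B = -\chi + \Theta$ on $B\backslash S'$ for a closed $(1,1)$-form $\Theta$ determined by the fibration. The analytically hard part --- the a priori estimates giving boundedness of $\phi$, continuity on $B$, smoothness on $B\backslash S'$, and the growth control of $\Omega_B$ near $S'$ --- is already carried out in \cite{ZZ20}, so our remaining task is only to differentiate: on $B\backslash S'$,
\begin{equation*}
    \Ric\omega_B = -i\d\dbar\log(\omega_B^{\,n}) = -i\d\dbar\phi - i\d\dbar\log\Omega_B = -\omega_B + \chi + \Ric\Omega_B = -\omega_B + \Theta .
\end{equation*}
Thus the first assertion reduces to the identification $\Theta = \omega_{WP,\lambda} - \lambda\eta$.

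The heart of the matter is the local computation of $\Ric\Omega_B = -i\d\dbar\log(f_{*}\Omega_X)$. Fix $U \subseteq B\backslash S'$ and holomorphic coordinates $(z,y)$ on $f^{-1}(U)$ adapted to $f$; the density of $f_{*}\Omega_X$ in the $y$-coordinates is then the fibre integral $y \mapsto \int_{X_y}\Omega_{\Psi_y}$ multiplied by the local weight of the reference metric of $\Omega_X$. By Theorem~1.1, $-i\d\dbar\log\int_{X_y}\Omega_{\Psi_y} = \omega_{WP,\lambda}$, while the weight contributes $-\chi - \lambda\eta$, so that
\begin{equation*}
    \Ric\Omega_B\big|_{U} = \omega_{WP,\lambda} - \chi - \lambda\eta .
\end{equation*}
The term $-\lambda\eta$ is identified by bookkeeping of line bundles: the hypothesis $2\pi c_1(X_b) = \lambda[\omega_{0,b}]$ forces $(K_X + \lambda L)|_{X_b} \equiv 0$, so $K_X + \lambda L = f^{*}L_B$ descends to $B$, and $L_B$ is a positive rational multiple of the hyperplane class $\OO_B(1)$ --- the nef class contracted by $f$ at the finite singular time of the flow being a positive multiple of $f^{*}(2\pi c_1(\OO_B(1)))$ --- so the reference line bundle carrying the adapted measure contributes a curvature term which, after choosing the reference metrics suitably (and absorbing the $-\chi$ coming from the reference metric on $[\omega_B]$), equals $-\lambda\eta$ for $\eta$ the corresponding multiple of $\omega_{FS}|_{B}$. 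Hence $\Theta = \chi + \Ric\Omega_B = \omega_{WP,\lambda} - \lambda\eta$ and $\Ric\omega_B = -\omega_B - \lambda\eta + \omega_{WP,\lambda}$ on $B\backslash S'$. I expect this step to be the main obstacle: one must choose $\Omega_X$ and its reference metrics so that the fibrewise restriction is genuinely the family $\Omega_{\Psi_y}$ of Theorem~1.1 (so that no choice-dependent pluriharmonic error survives $i\d\dbar$), fix the precise multiple of $\omega_{FS}|_B$ defining $\eta$, and control the curvature of the line bundle on $B$ that carries the adapted measure.

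Finally, I would obtain $\omega_B'$ by absorbing the $-\lambda\eta$ into the K\"ahler class. Put $\chi' := \chi + \lambda\eta$, a smooth K\"ahler form on $B$ in the class $[\omega_B] + \lambda[\eta]$, and solve the companion equation
\begin{equation*}
    (\chi' + i\d\dbar\phi')^{n} = e^{\phi'}\,\Omega_B
\end{equation*}
with the \emph{same} measure $\Omega_B$. Since $\Omega_B$ is unchanged and the new class dominates $\lambda\eta > 0$, the existence of a bounded solution $\phi'$, continuous on $B$ and smooth on $B\backslash S'$, follows from exactly the pluripotential estimates already used for $\omega_B$ in \cite{ZZ20} (a Ko\l odziej-type $L^{\infty}$ bound together with local elliptic regularity on $B\backslash S'$, where the right-hand side is smooth and nondegenerate). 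Setting $\omega_B' := \chi' + i\d\dbar\phi'$, the same differentiation as above gives, on $B\backslash S'$,
\begin{equation*}
    \Ric\omega_B' = -\omega_B' + \chi' + \Ric\Omega_B = -\omega_B' + (\chi + \lambda\eta) + (\omega_{WP,\lambda} - \chi - \lambda\eta) = -\omega_B' + \omega_{WP,\lambda} .
\end{equation*}
Combined with the regularity of $\omega_B$ quoted from \cite{ZZ20} and of $\omega_B'$ just established, this yields all three assertions.
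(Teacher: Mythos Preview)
Your approach is correct and follows the same overall strategy as the paper: differentiate the defining Monge--Amp\`ere equation to reduce everything to computing $\Ric(f_*\Omega)$, and then obtain $\omega_B'$ by solving a companion equation in the enlarged class $[\eta]+\lambda[\eta]=\frac{1}{1-e^{-T}}[\eta]$ with the same right-hand side measure. In the paper the reference form is literally $\chi=\eta$, so your $\chi'=\chi+\lambda\eta$ is exactly the paper's $\frac{1}{1-e^{-T}}\eta$.

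The one substantive organizational difference is in how the key identity $\Ric(f_*\Omega)=\omega_{WP,\lambda}-(1+\lambda)\eta$ is established. You sketch a direct local fibre/base decomposition of $\Omega_X$ and assert that the ``weight'' contributes $-\chi-\lambda\eta$ after line-bundle bookkeeping; this is correct in outline but, as you anticipate, is the step that needs care. The paper instead routes the computation through the globally defined semi-prescribed Ricci form $\omega_{SPR}$: it writes $\Omega=G\cdot{n\choose m}\omega_{SPR}^{n-m}\wedge f^*\eta^m$ with $G=f^*G'$, proves the clean formula $\Ric(\omega_{SPR}^{n-m}\wedge f^*\theta^m)=\lambda\omega_0+f^*\Ric\theta-f^*\omega_{WP,\lambda}$ (using that $\omega_{SPR,b}^{n-m}$ and $\Omega_{\Psi_b}$ differ by a function of $b$), and then pulls back $i\d\dbar\log G'$ to $X$ to read off the answer. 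This buys a coordinate-free global factorization of $\Omega$ and avoids having to track the $\Psi$-dependence and Hermitian-metric weights by hand; your direct argument would arrive at the same place once those weights are computed explicitly.
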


\noindent An immediate consequence of these twisted K\"ahler-Einstein equations is the following decomposition of the Chern classes in terms of the $(1,1)$-form $\omega_{WP,\lambda}$. We also present an algebro-geometric proof.

\begin{corollary}
If $B$ is a smooth K\"ahler manifold and $f: X \to B$ is a submersion, then we have
\begin{equation*}
    -2\pi c_1(B) + [\omega_{WP,\lambda}] = (\lambda + 1)[\eta]
\end{equation*}
and
\begin{equation*}
    2 \pi c_1(X) = \lambda [\omega_0] + 2 \pi f^* c_1(B) - f^*[\omega_{WP,\lambda}].
\end{equation*}
\end{corollary}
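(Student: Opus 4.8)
The plan is to obtain both identities by passing to de Rham cohomology, using the preceding theorem for the (intended) proof and the relative anticanonical bundle for the algebro-geometric one. Since $B$ is smooth and $f$ is a submersion the exceptional set $S'$ is empty, so the twisted K\"ahler--Einstein equations of the preceding theorem hold over all of $B$ with $\omega_B,\omega_B',\omega_{WP,\lambda}$ smooth there, and the Ricci forms of the K\"ahler metrics $\omega_B,\omega_B'$ satisfy $[\Ric\omega_B]=2\pi c_1(B)=[\Ric\omega_B']$. Taking cohomology classes in $\Ric\omega_B=-\omega_B-\lambda\eta+\omega_{WP,\lambda}$ gives
\[
2\pi c_1(B)=-[\omega_B]-\lambda[\eta]+[\omega_{WP,\lambda}],
\]
and I would then insert the cohomological normalisation $[\omega_B]=[\eta]$ fixed by the Zhang--Zhang construction (as used in the proof of the preceding theorem) to read off $-2\pi c_1(B)+[\omega_{WP,\lambda}]=(\lambda+1)[\eta]$, which is the first identity; taking classes in the $\omega_B'$ equation instead gives $[\omega_B']=-2\pi c_1(B)+[\omega_{WP,\lambda}]=(\lambda+1)[\eta]$ consistently.

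For the second identity I would pull the first back by $f$ and feed in the geometry of the contraction. The morphism $f$ is attached by Kawamata's basepoint free theorem to $K_X+\lambda L$, where $\lambda>0$ is the threshold making $K_X+\lambda L$ nef but not ample; this bundle is $f$-trivial, so on a smooth fibre, whose normal bundle $f^{*}T_bB|_{X_b}$ is trivial, adjunction gives $K_{X_b}=K_X|_{X_b}$ and $(K_X+\lambda L)|_{X_b}=0$, which reproduces $2\pi c_1(X_b)=\lambda[\omega_{0,b}]$ and pins the multiple $\eta$ of the Fubini--Study metric through $(\lambda+1)f^{*}[\eta]=2\pi c_1(K_X+\lambda L)=\lambda[\omega_0]-2\pi c_1(X)$. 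Applying $f^{*}$ to the first identity and substituting this relation yields $2\pi c_1(X)=\lambda[\omega_0]+2\pi f^{*}c_1(B)-f^{*}[\omega_{WP,\lambda}]$, the second identity; together with injectivity of $f^{*}$ on $H^2(\,\cdot\,;\R)$ this shows the two identities are in fact equivalent, but only the direction first $\Rightarrow$ second is needed and it requires no injectivity.

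The advertised algebro-geometric proof would avoid the preceding theorem entirely. I would normalise the fibrewise volume forms of the first theorem so that $\int_{X_b}\Omega_{\Psi_b}=1$; these then glue to a smooth Hermitian metric on $-K_{X/B}=-K_X+f^{*}K_B$ whose Chern curvature $\Theta$ represents $2\pi c_1(X)-2\pi f^{*}c_1(B)$ and restricts on each fibre to $\Ric(\Omega_{\Psi_b})=\lambda\omega_{0,b}$. Hence $[\Theta-\lambda\omega_0]$ restricts to zero on every fibre; as the fibres are Fano, $R^{1}f_{*}\R=0$, so by the Leray spectral sequence $[\Theta-\lambda\omega_0]=f^{*}[\gamma]$ for a unique $[\gamma]\in H^2(B;\R)$. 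Comparing $\Theta$ over a chart with the curvature of the unnormalised relative form built from $\Omega_{\Psi_b}$, whose fibrewise restriction is again $\lambda\omega_{0,b}$, and using $\omega_{WP,\lambda}=-i\d\dbar\log\int_{X_b}\Omega_{\Psi_b}$, I would identify $[\gamma]=-[\omega_{WP,\lambda}]$, giving $2\pi c_1(X)-2\pi f^{*}c_1(B)=\lambda[\omega_0]-f^{*}[\omega_{WP,\lambda}]$; this is the second identity, and the first follows from it by the relation $(\lambda+1)f^{*}[\eta]=\lambda[\omega_0]-2\pi c_1(X)$.

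The main obstacle is the identification $[\gamma]=-[\omega_{WP,\lambda}]$: one must rule out an extra Fubini--Study-type term along the base, which needs the fibre-integration formula relating $-i\d\dbar\log\int_{X_b}\Omega_{\Psi_b}$ to the curvature of $-K_{X/B}$, together with the observation that the fibre volume $\int_{X_b}\omega_{0,b}^{\dim X-\dim B}$ is independent of $b$, so that pushing a class down to $B$ detects exactly its base component. In the analytic proof the analogous point is establishing the cohomological normalisation $[\omega_B]=[\eta]$ of the Zhang--Zhang metric, which I would extract from its construction.
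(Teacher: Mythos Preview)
Your analytic derivation---taking cohomology of the twisted K\"ahler--Einstein equation with $[\omega_B]=[\eta]$, then pulling back and inserting $(\lambda+1)f^*[\eta]=\lambda[\omega_0]-2\pi c_1(X)$---is exactly the paper's argument.

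Your algebro-geometric route, however, is genuinely different. The paper works with the associated relative bundle $D_{X/B}=\lambda L+K_{X/B}$ (so $D^k=D_{X/B}^{k'}\otimes f^*K_B^{k'}$), applies the projection formula $f_*D^k=f_*D_{X/B}^{k'}\otimes K_B^{k'}$, and reads off the \emph{first} identity from $\OO_B(1)=f_*D_{X/B}^{k'}\otimes K_B^{k'}$ together with $\frac{2\pi}{k'}c_1(f_*D_{X/B}^{k'})=[\omega_{WP,\lambda}]$. You instead put a Hermitian metric on $-K_{X/B}$ via the normalised fibre volume forms and aim for the \emph{second} identity directly. Both are valid; the paper's version is a one-line application of the projection formula, while yours stays closer to differential geometry and makes the link between $\omega_{WP,\lambda}$ and the relative canonical bundle transparent.

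One simplification you should make: the Leray spectral sequence and the $R^1f_*\R=0$ step are unnecessary. In local product coordinates the curvature of your normalised metric on $-K_{X/B}$ is
\[
-i\d\dbar\log\Bigl(|F|^{2/\beta}|s|_{h_L}^{2\lambda}\big/ f^*\!\!\int_{X_b}\Omega_{\Psi_b}\Bigr)=\lambda\omega_0 - f^*\omega_{WP,\lambda}
\]
as an equality of forms on $X$, not merely of classes; this already exhibits $[\Theta]-\lambda[\omega_0]$ as $-f^*[\omega_{WP,\lambda}]$ without any detour through a class $[\gamma]$ to be identified afterwards. The ``main obstacle'' you flag therefore dissolves once you compute $\Theta$ explicitly.
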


\noindent We also comment that all of these results and constructions can and are adapted to the case where we assume the Fano fibration admits a smoothly varying family of K\"ahler-Einstein metrics. This will be of interest in discussing the convergence of the K\"ahler-Ricci flow to the twisted K\"ahler-Einstein metric in the second paper \cite{B25}.\\

\noindent \textbf{Acknowledgements.} \textit{The author would like to thank his supervisors Zhou Zhang and Haotian Wu, for the many insightful discussions and their support. His gratitude also goes to Tiernan Cartwright, Xilun Li, Minghao Miao and Xiaohua Zhu, for the discussions concerning some details in the arguments and their interest. Last but not least, the author would like to thank Wangjian Jian, Zhenlei Zhang and Gang Tian, for extending an invitation to visit BICMR and for many discussions, during which a large proportion of this paper was completed. The research visit was supported by the Chinese Academy of Sciences and the Beijing International Center for Mathematical Research.}

\section{The Semi-Prescribed Ricci Curvature Form}

Let $(X, \omega_0)$ be a compact K\"ahler manifold of dimension $\dim X = n$. We consider the following normalization of the K\"ahler-Ricci flow
\begin{equation}\label{KRF}\tag{KRF}
    \frac{\d \omega(t)}{\d t} = -\Ric (\omega(t)) - \omega(t), \quad \omega(0) = \omega_0,  
\end{equation}
for which the maximal existence time is given by the value
\begin{equation*}
    T = \sup\{t\in [0,\infty) : e^{-t}[\omega_0] - (1-e^{-t}) 2\pi c_1(X) \in \Kah(X)\},
\end{equation*}
by a well-known result of Tian and Zhang, \cite{TZ06}.\\
Let us assume $T < \infty$, which is equivalent to the case that $K_X$ is not nef, and we shall also assume that the initial metric is rational, i.e., $[\omega_0] \in H^{1,1}(X,\Q)$ and there exists a holomorphic $\Q$-line bundle $L$ such that $2\pi c_1(L) = [\omega_0]$. Kawamata's rationality and basepoint-free theorems imply
\begin{equation*}
    D := e^{-T} L + (1-e^{-T}) K_X
\end{equation*}
is a holomorphic $\Q$-line bundle which is semi-ample, i.e., for some sufficiently large integer $k \geq 1$, the line bundle $D$ induces a proper, surjective, holomorphic map with connected fibres, \cite{L04},
\begin{equation*}
    f: X \to B \subseteq \P H^0(X,kD)
\end{equation*}
where $kD = f^* \OO_B(1)$ and the image $B$ is an irreducible, normal, analytic variety. We shall fix $k$ in the construction of the $(1,1)$-form $\omega_{WP,\lambda}$. Let $\eta := \frac{1}{k} \omega_{FS}|_B$ be the restriction of the Fubini Study metric $\omega_{FS}$ to $B$, and note that $[f^*\eta] = [\omega(T)]$, where we abuse notation to mean $[\omega(T)] = e^{-T} [\omega_0] - (1-e^{-T}) 2\pi c_1(X)$ and so the cohomology class of the K\"ahler-Ricci flow (\ref{KRF}) evolves according to 
\begin{equation*}
    [\omega(t)] = \frac{e^{-t}-e^{-T}}{1-e^{-T}} [\omega_0] + \frac{1-e^{-t}}{1-e^{-T}} [f^*\eta].
\end{equation*}
Denote the Iitaka dimension by $m = \dim B$ and let $S' \subset B$ be the set containing the singular part of $B$ together with the critical points of $f$, and set $S = f^{-1}(S')$. As mentioned, we shall assume that $0 < m < n$.\\
\noindent We will denote the fibres of the map $f$ by $X_b := f^{-1}(b)$ for $b \in B \backslash S'$, and observe that $X_b$ is a smooth K\"ahler manifold equipped with K\"ahler metric $\omega_{0,b} := \omega_0|_{X_b}$. Moreover, $X_b$ is Fano as the restriction of the cohomology condition $[f^*\eta] = [\omega(T)]$ to each fibre $X_b$ implies
\begin{equation*}
    \lambda \omega_{0,b} \in 2\pi c_1(X_b)
\end{equation*}
for all $b \in B\backslash S'$, where $\lambda = \frac{e^{-T}}{1-e^{-T}} > 0$.\\
Following Zhang and Zhang, \cite{ZZ20}, (cf.\cite{Y78}),  one can construct a unique, $d$-closed, real, $(1,1)$-form $\omega_{SPR} = \omega_0 + i\d\dbar \rho_{SPR}$ on $X\backslash S$ for some $\rho_{SPR} \in C^\infty(X\backslash S, \R)$, which is a K\"ahler metric along each fibre $X_b$, denoted $\omega_{SPR,b}:= \omega_{SPR}|_{X_b}$, satisfying
\begin{equation*}
    \Ric(\omega_{SPR,b}) = \lambda\omega_{0,b}
\end{equation*}
for all $b \in B\backslash S'$. Let us refer to $\omega_{SPR}$ as the semi-prescribed Ricci curvature form since the restriction of $\omega_{SPR}$ to any fibre $X_b$ has Ricci curvature prescribed by $\lambda\omega_{0,b}$. This construction is analogous to that of the semi-Ricci flat form when $T = \infty$, $K_X$ is semi-ample and $X$ admits a Calabi-Yau fibration, \cite{ST07}.

\section{The $(1,1)$-Form $\omega_{WP,\lambda}$}

As $kD = f^* \OO_B(1)$, it follows that for every $b \in B\backslash S'$ we have
\begin{equation*}
    \OO_{X_b} = f^*\OO_B(1)|_{X_b} = kD|_{X_b} = kD_b,
\end{equation*}
as holomorphic line bundles, where for notational convenience we denote $D_b = D|_{X_b}$. Due to the rationality of $D$, there exists positive integers, $p,q,r$, such that
\begin{equation*}
    pD = qL + rK_X
\end{equation*}
and let us fix $k$ to be the smallest multiple of $p$ that generates the map $f$. Note $k' = k(1-e^{-T})$ is an integer. Therefore, after restricting to the fibre $X_b$, we have
\begin{equation*}
    D_b^k = L_b^\alpha \otimes K_{X_b}^\beta
\end{equation*}
where $\frac{\alpha}{\beta} = \frac{q}{r} = \lambda$, and we denote $L_b := L|_{X_b}$, and note $K_X|_{X_b} = K_{X_b}$ by the adjunction formula.\\
In the same way that we can define the relative canonical line bundle as $K_{X/B} = K_X - f^* K_B$, we define the associated relative $\Q$-line bundle, $D_{X/B}$, to $D$ by
\begin{equation}\label{associated_relative_bundle}
    D_{X/B} = \frac{1}{1-e^{-T}}D - f^* K_B = \lambda L +  K_{X/B}.
\end{equation}
We make the observation that
\begin{equation*}
k'D_{X/B}|_{X_b} = kD_b = \OO_{X_b},
\end{equation*}
so that $h^0(X_b, D_{X/B}^{k'}|_{X_b}) = 1$ is constant with respect to $b$, and we can apply \cite[Thm I.8.5.iii]{BHPV04} so that $f_* D_{X/B}^{k'}$ is a locally free, coherent sheaf of rank $1$, i.e., a holomorphic line bundle.\\
Let $\Psi$ be a local, non-vanishing, holomorphic section of $f_* D^{k'}_{X/B}$ on some open set $U' \subseteq B\backslash S'$. Then,
\begin{equation*}
    \Psi \in \Gamma(U', f_* D_{X/B}^{k'}) \iff \Psi \in \Gamma(f^{-1}(U'), D^{k'}_{X/B})
    \iff \Psi_b \in \Gamma(X_b, D_b^k)
\end{equation*}
is a family of global, non-vanishing, holomorphic sections of $D^k_b$ which vary holomorphically in $b \in U'$.\\
Let $(z_1,\dots,z_n)$ be local product coordinates in some suitable open set $U \subseteq X\backslash S$, where the first $n-m$ coordinates are in the fibre direction and let $s$ be a local, non-vanishing, holomorphic frame field for $L$ over $U$ and denote its restriction to each fibre, $X_b$, by $s_b := s|_{X_b}$. Therefore, we can write the family, $\Psi_b$, in local product coordinates as
\begin{equation*}
    \Psi_b = F(b,z) (s_b)^\alpha \otimes (dz^1 \wedge \dots \wedge dz^{n-m})^{\beta},
\end{equation*}
for some non-vanishing, holomorphic function $F(b,z)$, as $D^k_b = L^\alpha_b \otimes K_{X_b}^\beta$.\\
Next, we define a volume form $\Omega_{\Psi_b}$ on the fibre $X_b$ which is dependent on the choice of sections $\Psi_b$, however we need a Hermitian metric to contract the local section $s$. 
Recall that as $2\pi c_1(L) = [\omega_0]$, there exists a Hermitian metric $h_L$ such that $\Ric h_L = \omega_0$, which is unique up to a multiplicative constant which we fix arbitrarily, \cite[pg. 148]{GH94}. This constant only has the effect of scaling the entire family $\Omega_{\Psi_b}$ and leaves the value of $\Ric \Omega_{\Psi_b}$ unchanged as well as the definition of the $(1,1)$-form $\omega_{WP,\lambda}$. We take $h_{L_b} := h_L|_{X_b}$ and define $\Omega_{\Psi_b}$ in local coordinates to be
\begin{equation*}
    \Omega_{\Psi_b} = i^{n-m} |F(b,z)|^{\frac{2}{\beta}} |s_b|_{h_{L_b}}^{\frac{2\alpha}{\beta}} dz^1 \wedge \dots \wedge d\oo{z}^{n-m}.
\end{equation*}
This is well-defined and independent of local coordinates, in particular, it does not depend on the choice of local, non-vanishing, holomorphic section $s$. If $s'$ is another local, non-vanishing, holomorphic section of $L$ over $U$, then $s' = hs$ for some local, non-vanishing, holomorphic function $h$ and writing
\begin{equation*}
    \Psi_b = F'(b,z) (s'_b)^\alpha \otimes (dz^1 \wedge \dots \wedge dz^{n-m})^\beta,
\end{equation*}
we see $F(b,z) = h(b,z)^\alpha F'(b,z)$, which implies
\begin{equation*}
    i^{n-m}|F'(b,z)|^{\frac{2}{\beta}}|s'_b|_{h_{L_b}}^{\frac{2\alpha}{\beta}} dz^1 \wedge \dots \wedge d\oo{z}^{n-m} = i^{n-m}|F(b,z)|^{\frac{2}{\beta}}|s_b|_{h_{L_b}}^{\frac{2\alpha}{\beta}} dz^1 \wedge \dots \wedge d\oo{z}^{n-m}.
\end{equation*}
Alternatively, we can write $\Omega_{\Psi_b} = i^{n-m}((\Psi_b \wedge \oo{\Psi}_b) \llcorner h_{L^{\alpha}})^{1/\beta}$, where $h_{L^{\alpha}}$ is the induced Hermitian metric on $L^\alpha$, i.e., $h_{L^{\alpha}}(s^\alpha, \oo{s}^\alpha) = h_{L}(s,\oo{s})^\alpha$, and $\llcorner$ denotes contraction of the $L^\alpha \otimes \oo{L}^\alpha$-valued component of $\Psi_b \wedge \oo{\Psi}_b$ with the metric $h_{L}$. It follows from $\lambda = \frac{\alpha}{\beta}$ and the holomorphicity of $F(b,z)$ that
\begin{align*}
    \Ric \Omega_{\Psi_b} &= -i\d_{X_b}\dbar_{X_b}\log(|F(b,z)|^{\frac{2}{\beta}} |s_b|_{h_{L_b}}^{2\lambda})\\
    &= -\lambda i\d_{X_b}\dbar_{X_b} \log |s_b|_{h_{L_b}}^2 = \lambda \Ric h_{L}|_{X_b} = \lambda \omega_{0,b},
\end{align*}
from which we reconfirm $2 \pi c_1(X_b) = \lambda [\omega_{0,b}]$.\\
Finally, we define the $(1,1)$-form $\omega_{WP,\lambda}$, locally in $U'$, by
\begin{equation*}
    \omega_{WP,\lambda}|_{U'} = -i\d\dbar \log \int_{X_b} \Omega_{\Psi_b}.
\end{equation*}
To see the form $\omega_{WP,\lambda}$ can be globally defined, we can check it is independent of the choice of family of sections $\Psi_b$ and glues on overlapping open sets $U', V' \subseteq B \backslash S'$. Suppose $\Psi'_b \in H^0(X_b, D_b^k)$ is another family of global, non-vanishing, holomorphic sections over $X_b$ which varies holomorphically in $b \in V'$, for some open set $V' \subseteq B\backslash S'$ where $U' \cap V' \neq \varnothing$. Suppose $b \in U' \cap V'$, and write $\Psi_b'$ in local product coordinates as
\begin{equation*}
    \Psi'_b = F'(b,z) (s_b)^\alpha \otimes (dz^1 \wedge \dots \wedge dz^{n-m})^\beta,
\end{equation*}
so that the associated volume form $\Omega_{\Psi'_b}$ is given by
\begin{equation*}
    \Omega_{\Psi'_b} = i^{n-m} |F'(b,z)|^{\frac{2}{\beta}} |s_b|_{h_{L_b}}^{\frac{2\alpha}{\beta}} dz^1 \wedge \dots \wedge d\oo{z}^{n-m}.
\end{equation*}
As both $\Psi_b$ and $\Psi'_b$ are global, non-vanishing, holomorphic sections of $D_b$ over $X_b$, or equivalently, $\Psi$ and $\Psi'$ are local, non-vanishing, holomorphic sections of the line bundle $f_* D^{k'}_{X/B}$ over $U' \cap V'$, there exists a non-vanishing, holomorphic function $h$ over $U' \cap V'$ such that $\Psi' = h\Psi$. It follows that
\begin{equation*}
    F'(b,z) = h(b) F(b,z),
\end{equation*}
and
\begin{equation*}
    \Omega_{\Psi'_b} = |h|^{\frac{2}{\beta}} \Omega_{\Psi_b}.
\end{equation*}
Therefore, as $i\d\dbar \log |h|^{\frac{2}{\beta}} = 0$, we have
\begin{equation*}
    \omega_{WP,\lambda}|_{U' \cap V'} = -i\d\dbar\log \int_{X_b} \Omega_{\Psi'_b} = -i\d\dbar \log \int_{X_b} \Omega_{\Psi_b},
\end{equation*}
i.e., the local definitions of the $(1,1)$-form $\omega_{WP,\lambda}$ glue together to define a global form on $B\backslash S'$.\\
Furthermore, we can define a singular Hermitian metric $h_{WP,\lambda}$ on $f_* D_{X/B}^{k'}$ over $B \backslash S'$ by
\begin{equation*}
    \abs{\Psi}^2_{h_{WP,\lambda}} = \int_{X_b} \Omega_{\Psi_b},
\end{equation*}
so that the $(1,1)$-form $\omega_{WP,\lambda}$ is related to its curvature, $\Ric h_{WP,\lambda}$, by
\begin{equation*}
    \omega_{WP,\lambda} = \Ric h_{WP,\lambda}.
\end{equation*}

\noindent The notation $\omega_{WP,\lambda}$ is inspired by a theorem of Tian, \cite[Thm. 2]{T87}, on the Weil-Petersson metric on the universal polarized deformation space of a Calabi-Yau manifold. In the setting of infinite time singularities of the K\"ahler-Ricci flow with semi-ample $K_X$, we have exactly a map $f: X \to B$ which is a metrically polarized deformation of Calabi-Yau manifolds excluding singular fibres. A metrically polarized deformation of Calabi-Yau manifolds has a smoothly varying family of K\"ahler-metrics, i.e., $(\omega_{0,b})_{b \in B \backslash S'}$ associated to it, \cite[Defn. 3.2]{FS90}, and we denote the unique Ricci-flat metric in the class $[\omega_{0,b}]$ by $\omega_{SRF,b}$. This family of Ricci-flat metrics glues together smoothly to give the semi Ricci-flat form $\omega_{SRF}$ on $X \backslash S$ where $\omega_{SRF}|_{X_b} := \omega_{SRF,b}$, \cite{ST07}, \cite[Lem. 3.2]{ST12}.\\
We define the Weil-Petersson metric, $\omega_{WP}$, in the following way. Take the Kodaira-Spencer map
\begin{equation*}
    \rho_b: T^{1,0}_b B \to H^{1}(X_b,T^{1,0}X_b)
\end{equation*}
which measures the infinitesimal variation of complex structure of the fibers $X_b$ in the direction $V \in T^{1,0}_b B$. For example, if $b \mapsto \dim H^{1}(X_b,T^{1,0}X_b)$ is constant and $\rho_b = 0$ then $f:X \to B$ is locally biholomorphic to a product manifold, \cite[Thm. 4.6]{K86}. Then for any $V, W \in T^{1,0}_b B$ choose $\gamma_V$, $\gamma_W$, to be the unique $\Delta_{\dbar}$-harmonic representatives of $\rho_b(V), \rho_b(W)$, respectively. We define the Weil-Petersson metric to be
\begin{equation*}
    \omega_{WP}(V,\oo{W}) = \int_{X_b} \ip{\gamma_V, \oo{\gamma_W}}_{\omega_{SRF,b}} \omega_{SRF,b}^{n-m}. 
\end{equation*}
Tian proves that for a polarized deformation of a Calabi-Yau manifold,
\begin{equation*}
    \omega_{WP} = -i\d\dbar \log \int_{X_b} \Omega_{\Psi_b}
\end{equation*}
where the volume form $\Omega_{\Psi_b}$ is constructed with a similar method to this section using holomorphic $(n-m,0)$-forms $\Psi_b$ on $X_b$, \cite[Thm. 2]{T87}. In fact, the Weil-Petersson metric can be defined for any metrically polarized deformation of K\"ahler manifolds but, at the moment, an explicit connection between $\omega_{WP,\lambda}$ and $\omega_{WP}$ for our metrically polarized deformation of Fano manifolds is difficult to establish and may appear in future work.

\section{The Twisted K\"ahler-Einstein Metrics}

\noindent Again, by following Zhang and Zhang, \cite{ZZ20}, one can construct a nice metric on the base space. As $[f^*\eta] = [\omega(T)]$, we can let $\chi$ be the smooth $(1,1)$-form such that
\begin{equation}\label{choice_of_Omega}
    f^*\eta = e^{-T} \omega_0 + (1-e^{-T}) \chi
\end{equation}
and choose a smooth volume form $\Omega$ such that $\chi = -\Ric \Omega$. The volume form $\Omega$ is unique up to multiplicative constant which we fix by imposing
\begin{equation*}
    \int_X \Omega =  \int_X {n\choose m} \omega_0^{n-m} \wedge f^*\eta^m.
\end{equation*}
Recall the following definition \cite[Defn. 3.1]{ST12}.

\begin{definition}
The push-forward of a volume form $\Omega$ on $X$ by the map $f: X \to B$ is defined to be the $(m,m)$-current $f_* \Omega$ such that
\begin{equation*}
    \int_B \psi(f_* \Omega) = \int_X (f^*\psi) \Omega
\end{equation*}
for all continuous functions $\psi \in C^0(B)$.
\end{definition}
\noindent In particular, we have
\begin{equation*}
    \int_B \psi (f_*\Omega) = \int_X (f^*\psi) \Omega = \int_B \psi \int_{X_b}\Omega,
\end{equation*}
so that on $B\backslash S'$
\begin{equation*}
    f_*\Omega = \int_{X_b} \Omega
\end{equation*}
is a smooth volume form.\\
So let $G'$ be the smooth positive function on $B\backslash S'$ defined by
\begin{equation*}
    G' = \frac{f_* \Omega}{V\eta^m}
\end{equation*}
where $V = {n\choose m} \int_{X_b} \omega_{0,b}^{n-m}$ is a constant. This follows from the commutability of the push-forward and the exterior derivative, i.e.,
\begin{equation*}
    d(f_*\omega_{0}^{n-m}) = f_*(d \omega_{0}^{n-m}) = 0.
\end{equation*}

\noindent By \cite[Prop. 3.2, Thm. 3.2]{ST12}, there exists positive constants $\delta, \varepsilon > 0$ such that $0 < \delta \leq G' \in L^{1+\e}(B, \eta)$, and the complex Monge-Ampere equation
\begin{equation}\label{base_space_CMA}
    \left(\eta + i\d\dbar \rho_B\right)^m = G' e^{\rho_B} \eta^m
\end{equation}
has a unique solution $\rho_B \in \PSH(B,\eta) \cap C^0(B) \cap C^{\infty}(B\backslash S')$ such that
\begin{equation*}
    \omega_B := \eta + i\d\dbar \rho_B
\end{equation*}
is a smooth K\"ahler metric on $B \backslash S'$ which we shall call the base space metric.

\begin{remark}\label{base_space_CMA_remark}
The argument for existence and uniqueness is given by Song and Tian in \cite[Thm. 3.2]{ST12}. In summary, one uses a resolution of singularities for $B$, i.e. we choose a birational map $\pi: Y \to B$ where $Y$ is a smooth, analytic variety, and $E = \pi^{-1}(S')$ is a simple normal crossings divisor and lift the complex Monge-Ampere equation (\ref{base_space_CMA}) to $Y$. Next, one chooses a sequence of functions $G_j$ on $Y$ which converge to $\pi^*G'$ in $L^{1+\e}(Y,\pi^*\eta^m)$ and satisfy certain properties. This only relies on the fact that $0 < \delta \leq G' \in L^{1+\e}(B,\eta)$ for any K\"ahler metric $\eta$. The $G_j$ are used to define a family of CMA equations and after deriving $C^0$, $C^2$ and $C^k$-estimates for their solutions $\rho_j$ by standard arguments from Yau and elliptic PDE theory, independent of $j$, it is deduced the that solutions $\rho_j$ converge to a solution $\rho_Y \in \PSH(Y,\pi^*\eta) \cap C^0(Y) \cap C^{\infty}(Y\backslash E)$. Finally, $\rho_Y$ descends to $B$ to give us $\rho_B$.
\end{remark}

\noindent Now we define a real, smooth, positive function on $X\backslash S$ by
\begin{equation*}
    G = \frac{\Omega}{{n\choose m}\omega_{SPR}^{n-m} \wedge f^*\eta^m},
\end{equation*}
which is constant along fibres and descends to a smooth, positive function $G''$ on the regular part of the base space $B\backslash S'$.

\begin{lemma}\label{pullback_G'_is_G}
We have $G = f^*G'$ on $X\backslash S$.
\end{lemma}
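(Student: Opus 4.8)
\noindent The plan is to deduce the identity from the fact, already recorded above, that $G$ is constant along the fibres, thereby reducing everything to identifying the descended function on the base with $G'$ by integration along the fibres. Write $G = f^*G''$ with $G''$ the smooth positive function on $B\backslash S'$ induced by $G$. The definition of $G$ then reads, on $X\backslash S$,
\begin{equation*}
    \Omega = {n\choose m}\,G\,\omega_{SPR}^{n-m}\wedge f^*\eta^{m} = {n\choose m}\,\omega_{SPR}^{n-m}\wedge f^*(G''\eta^{m}),
\end{equation*}
the fibrewise-constant factor $f^*G''$ having been absorbed into the base form. Since $S = f^{-1}(S')$, the map $f$ restricts to a proper holomorphic submersion $X\backslash S \to B\backslash S'$ with compact connected fibres, so I may apply the push-forward $f_*$. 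Using the projection formula for integration along the fibre, $f_*(\alpha\wedge f^*\beta) = (f_*\alpha)\,\beta$ for a smooth form $\beta$ on $B\backslash S'$ and an $(n-m,n-m)$-form $\alpha$ on $X\backslash S$ (consistent with the push-forward defined above when $\alpha\wedge f^*\beta$ is of top degree), I obtain
\begin{equation*}
    f_*\Omega = {n\choose m}\,(f_*\omega_{SPR}^{n-m})\,G''\,\eta^{m}\qquad\text{on }B\backslash S',
\end{equation*}
where $f_*\omega_{SPR}^{n-m}$ is the function $b \mapsto \int_{X_b}\omega_{SPR}^{n-m}|_{X_b} = \int_{X_b}\omega_{SPR,b}^{n-m}$.

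\noindent Next I would evaluate this fibre integral cohomologically. As $\omega_{SPR} = \omega_{0} + i\d\dbar\rho_{SPR}$ with $\rho_{SPR}\in C^{\infty}(X\backslash S)$, restriction to a fibre gives $\omega_{SPR,b} - \omega_{0,b} = i\d\dbar(\rho_{SPR}|_{X_b})$, which is $\d\dbar$-exact on the compact K\"ahler manifold $X_b$; hence $\int_{X_b}\omega_{SPR,b}^{n-m} = \int_{X_b}\omega_{0,b}^{n-m}$ by Stokes' theorem, and this value does not depend on $b \in B\backslash S'$. Substituting, $f_*\Omega = {n\choose m}(\int_{X_b}\omega_{0,b}^{n-m})\,G''\,\eta^{m} = V\,G''\,\eta^{m}$, and comparing with the definition $G' = f_*\Omega/(V\eta^{m})$ yields $G'' = G'$ on $B\backslash S'$. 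Therefore $G = f^*G'' = f^*G'$ on $X\backslash S$, which is the claim.

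\noindent If one prefers to avoid the projection formula as a black box, the same computation can be carried out in local product coordinates $(z,w)$ on $U\subseteq X\backslash S$: since $f^*\eta^{m}$ already contains all of the base directions, wedging it with $\omega_{SPR}^{n-m}$ annihilates every term of $\omega_{SPR}^{n-m}$ involving some $dw^{j}$ or $d\oo{w}^{j}$, so that only the pure fibre block survives and $\omega_{SPR}^{n-m}\wedge f^*\eta^{m} = (\omega_{SPR}|_{X_b})^{n-m}\wedge f^*\eta^{m}$; integrating out the fibre factor then reproduces the displayed identities. I expect the only points genuinely needing care are the legitimacy of integration along the fibre over $B\backslash S'$ (fine, since there $f$ is a proper holomorphic submersion with compact connected fibres) and the normalization $\int_{X_b}\omega_{SPR,b}^{n-m} = \int_{X_b}\omega_{0,b}^{n-m}$. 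The fibrewise-constancy of $G$ used at the outset was already recorded; it also follows directly from $\Ric\Omega|_{X_b} = -\chi|_{X_b} = \lambda\omega_{0,b} = \Ric(\omega_{SPR,b})$ --- the middle equality being the restriction of (\ref{choice_of_Omega}) to $X_b$ --- which forces $i\d\dbar(\log G|_{X_b}) = 0$ and hence $\log G|_{X_b}$ constant on the compact $X_b$.
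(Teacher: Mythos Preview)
Your proof is correct and follows essentially the same route as the paper: descend $G$ to a function $G''$ on $B\backslash S'$, push $\Omega$ forward along the fibres using the projection formula, invoke $\int_{X_b}\omega_{SPR,b}^{n-m}=\int_{X_b}\omega_{0,b}^{n-m}$, and compare with the definition of $G'$. The only cosmetic difference is that the paper writes out the push-forward computation weakly against test functions $\psi\in C^0(B)$ rather than appealing to the projection formula directly; your additional remarks justifying the fibre-volume constancy and the fibrewise constancy of $G$ make explicit what the paper leaves tacit.
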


\begin{proof}
The argument is similar to \cite[Lem. 3.3]{ST12}. In particular, for any function $\psi \in C^{0}(B)$,
\begin{align*}
    \int_{B\backslash S'} \psi G' \eta^m = \frac{1}{V}\int_{B\backslash S'} \psi (f_*\Omega) &= \frac{1}{V}\int_{B\backslash S'} \psi \left(\int_{X_b}\Omega\right)\\
    &= \frac{1}{V}\int_{X\backslash S} (f^*\psi) \Omega\\
    &= \frac{1}{V} \int_{X\backslash S} (f^*\psi) (f^*G'') {n\choose m} \omega_{SPR}^{n-m} \wedge f^*\eta^m\\
    &= \frac{1}{V} \int_{B\backslash S'} \psi G'' \left(\int_{X_b}\omega_{SPR,b}^{n-m} \right) \eta^m\\
    &= \int_{B\backslash S'} \psi G'' \eta^m.
\end{align*}
Thus $G' = G''$ on $B \backslash S'$ and the result follows.
\end{proof}

\noindent Let us prove the following very useful tool.

\begin{proposition}\label{ric_of_fibration_volume_form}
Let $\theta$ be any K\"ahler metric on $B\backslash S'$, then on $X \backslash S$ we have
\begin{equation}
    \Ric (\omega_{SPR}^{n-m} \wedge f^* \theta^m) = \lambda \omega_0  +f^* \Ric \theta - f^* \omega_{WP,\lambda}.
\end{equation}
\end{proposition}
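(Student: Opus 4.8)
The plan is to compute $\Ric(\omega_{SPR}^{n-m} \wedge f^*\theta^m)$ by expressing this $(n,n)$-form as a positive function times a fixed reference volume form whose Ricci curvature we understand, and then differentiating the logarithm of that function. Since $\Ric$ of a volume form is $-i\d\dbar \log$ of its local coefficient, the key is to identify the coefficient. Working in local product coordinates $(z_1,\dots,z_n)$ on $U \subseteq X\backslash S$ as in Section 3, with the first $n-m$ coordinates in the fibre direction, I would write $\omega_{SPR}^{n-m}\wedge f^*\theta^m$ as a multiple of $i^n\, dz^1\wedge d\oo z^1 \wedge \dots$; the factorisation into fibre and base pieces shows this multiple splits (up to the Jacobian of the fibration, which contributes nothing to $\Ric$ since it is holomorphic) as (the fibrewise coefficient of $\omega_{SPR,b}^{n-m}$) times (the base coefficient of $\theta^m$).

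The second factor is handled immediately: $-i\d\dbar\log$ of the local coefficient of $f^*\theta^m$ equals $f^*\Ric(\theta^m) = f^*\Ric\theta$ (Ricci of a power of a metric is the Ricci of the metric, since $\theta^m$ is a constant multiple of $\det$ times the coordinate form). For the first factor, the idea is to relate $\omega_{SPR,b}^{n-m}$ to the volume form $\Omega_{\Psi_b}$ from Section 3. Both are volume forms on the fibre $X_b$; by construction $\Ric \omega_{SPR,b} = \lambda\omega_{0,b} = \Ric\Omega_{\Psi_b}$, so their ratio $\omega_{SPR,b}^{n-m}/\Omega_{\Psi_b} =: e^{\phi_b}$ has $\Ric$-difference zero along the fibre, i.e. $i\d_{X_b}\dbar_{X_b}\phi_b = 0$, so $\phi_b$ is pluriharmonic on the compact fibre $X_b$, hence constant on each fibre. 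Therefore $\omega_{SPR,b}^{n-m} = c(b)\,\Omega_{\Psi_b}$ for a positive function $c(b)$ on $U'$, and integrating over $X_b$ gives $c(b) = \left(\int_{X_b}\omega_{SPR,b}^{n-m}\right)\big/\left(\int_{X_b}\Omega_{\Psi_b}\right)$; the numerator is the constant $\frac{V}{\binom{n}{m}}$ (the class $[\omega_{0,b}]$ is fixed), so $\log c(b) = \mathrm{const} - \log\int_{X_b}\Omega_{\Psi_b}$ and hence $-i\d\dbar\log c(b) = i\d\dbar\log\int_{X_b}\Omega_{\Psi_b} = -\omega_{WP,\lambda}$.

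Assembling: the local coefficient of $\omega_{SPR}^{n-m}\wedge f^*\theta^m$ is (up to a holomorphic Jacobian factor) $c(b)\cdot\bigl(\text{coeff of }\Omega_{\Psi_b}\bigr)\cdot\bigl(\text{coeff of }f^*\theta^m\bigr)$, so taking $-i\d\dbar\log$ term by term yields $-i\d\dbar\log c(b) + \Ric\Omega_{\Psi_b} + f^*\Ric\theta = -f^*\omega_{WP,\lambda} + \lambda\omega_0 + f^*\Ric\theta$, which is the claimed identity (note $\Ric\Omega_{\Psi_b}$, computed on $X$ rather than fibrewise, picks up $\lambda\omega_0$ rather than $\lambda\omega_{0,b}$, since on $X$ we differentiate in all variables and $|s|^2_{h_L}$ is globally defined with $\Ric h_L = \omega_0$ — this is exactly the computation already displayed in Section 3 but read on $X\backslash S$).

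The main obstacle I anticipate is bookkeeping the pluriharmonic-implies-constant step carefully and, relatedly, checking that all the "$i\d\dbar\log$ of a local coefficient" manipulations genuinely glue to global forms on $X\backslash S$ rather than merely being local identities — in particular that the holomorphic Jacobian of the product coordinates, and the choice of local frame $s$ for $L$, drop out (the latter was already verified in Section 3 for $\Omega_{\Psi_b}$, and the former because it is holomorphic and nowhere zero). A secondary subtlety is making sure the decomposition of $i^n dz\wedge d\oo z$ into fibre and base wedge factors is done with the correct signs/constants so that the fibrewise integral of $\Omega_{\Psi_b}$ is literally the coefficient that appears; this is routine but must be stated precisely. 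Once those points are nailed down, the computation is a direct application of the formula $\Ric(e^u \cdot \text{hol.\ vol.\ form}) = -i\d\dbar u$ together with the two observations above.
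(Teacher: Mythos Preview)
Your proposal is correct and follows essentially the same approach as the paper's own proof: both arguments hinge on the observation that $\omega_{SPR,b}^{n-m}$ and $\Omega_{\Psi_b}$ have the same fibrewise Ricci curvature $\lambda\omega_{0,b}$, hence differ by a function $c(b)$ (your notation; the paper writes $\Omega_{\Psi_b} = f^*\mu\,\omega_{SPR,b}^{n-m}$ with $\mu = 1/c$), and both then identify $-i\d\dbar\log c$ with $-\omega_{WP,\lambda}$ via fibre integration and the constancy of $\int_{X_b}\omega_{SPR,b}^{n-m}$. The only cosmetic difference is that the paper packages the local-coefficient bookkeeping by writing $f^*\mu = (\Omega_{\Psi_b}\wedge f^*\theta^m)/(\omega_{SPR}^{n-m}\wedge f^*\theta^m)$ as a ratio of $(n,n)$-forms on $X$ and computing $\Omega_{\Psi_b}\wedge f^*\theta^m$ directly, which sidesteps the Jacobian and sign worries you flag; your version makes those steps explicit instead, but the content is the same.
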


\begin{proof}
Let $\Psi_b \in H^0(X_b,D_b^k)$ be a family of holomorphic sections which vary holomorphically in $b$ in some suitable open set $U' \subseteq B$, and are globally non-vanishing on their respective fibres. Then as the associated volume form $\Omega_{\Psi_b}$ satisfies $\Ric \Omega_{\Psi_b} = \lambda \omega_{0,b}$, as does $\omega_{SPR,b}^{n-m}$, so these volume forms differ by a multiplicative constant on each fiber. That is, 
\begin{equation*}
    \Omega_{\Psi_b} = f^*\mu \ \omega_{SPR,b}^{n-m},
\end{equation*}
where $\mu$ is a smooth function in $b \in U'$. Moreover, since $f^*\mu$ is constant along fibres, we have 
\begin{equation*}
    \mu= \frac{\int_{X_b} \Omega_{\Psi_b}}{\int_{X_b}\omega_{SPR,b}^{n-m}}
\end{equation*}
and furthermore,
\begin{equation*}
    \omega_{WP,\lambda}|_{U'} = -i\d\dbar \log \mu,
\end{equation*}
as $b \mapsto \int_{X_b} \omega_{SPR,b}^{n-m}$ is constant.\\ 
Now, note that
\begin{equation*}
    f^*\mu = \frac{\Omega_{\Psi_b}}{\omega_{SPR,b}^{n-m}} = \frac{\Omega_{\Psi_b} \wedge f^* \theta^m}{\omega_{SPR}^{n-m} \wedge f^* \theta^m}.
\end{equation*}
Additionally, using local product coordinates where the first $n-m$ coordinates represent the fibre direction again, the K\"ahler metric $\theta$ is given locally by
\begin{equation*}
    \theta = i\sum_{i,j=n-m+1}^n \theta_{i\oo{j}} dz^i \wedge d\oo{z}^j.
\end{equation*}
It follows that
\begin{equation*}
    \Omega_{\Psi_b} \wedge f^* \theta^m = i^n |F|^{\frac{2}{\beta}} |s|^{2\lambda}_{h_{L}} f^*\det(\theta_{i\oo{j}}) dz^1 \wedge \dots \wedge d\oo{z}^n
\end{equation*}
and so
\begin{align*}
    -i\d\dbar\log (\omega_{SPR}^{n-m} \wedge f^* \theta^m) &= -i\d\dbar \log \left(\frac{1}{f^*\mu} \Omega_{\Psi_b} \wedge f^* \theta^m\right)\\
    &= -i\d\dbar\log\left(\frac{|F|^{\frac{2}{\beta}} |s|^{2\lambda}_{h_{L}} f^*\det(\theta_{i\oo{j}})}{f^*\mu}\right)\\
    &=  \lambda \omega_0 + f^* \Ric \theta - f^*\omega_{WP,\lambda}.
\end{align*}
\end{proof}

\begin{proposition}\label{WPL_FS_equation}
On $B\backslash S'$ we have
\begin{equation*}
    -\Ric(f_*\Omega) + \omega_{WP,\lambda} = \frac{1}{1-e^{-T}}\eta.
\end{equation*}
\end{proposition}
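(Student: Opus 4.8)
The plan is to compute $\Ric\Omega$ in two different ways and compare. On one hand, the defining relations give it directly: by \eqref{choice_of_Omega} together with the choice $\chi = -\Ric\Omega$,
\[
\Ric\Omega = -\chi = -\frac{1}{1-e^{-T}}\bigl(f^*\eta - e^{-T}\omega_0\bigr) = -\frac{1}{1-e^{-T}}f^*\eta + \lambda\omega_0,
\]
where I have used $\lambda = e^{-T}/(1-e^{-T})$. On the other hand, the definition of $G$ together with Lemma~\ref{pullback_G'_is_G} lets me write $\Omega = {n \choose m}(f^*G')\,\omega_{SPR}^{n-m}\wedge f^*\eta^m$ on $X\backslash S$, and since ${n \choose m}$ is a constant,
\[
\Ric\Omega = -i\d\dbar\log(f^*G') + \Ric\bigl(\omega_{SPR}^{n-m}\wedge f^*\eta^m\bigr).
\]
To the last term I would apply Proposition~\ref{ric_of_fibration_volume_form} with $\theta = \eta$, which rewrites it as $\lambda\omega_0 + f^*\Ric\eta - f^*\omega_{WP,\lambda}$.

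Equating the two expressions for $\Ric\Omega$ and cancelling the common $\lambda\omega_0$ leaves
\[
-\frac{1}{1-e^{-T}}f^*\eta = -i\d\dbar\log(f^*G') + f^*\Ric\eta - f^*\omega_{WP,\lambda}.
\]
Next I would recognise the first two terms on the right as a pullback of $\Ric(f_*\Omega)$: since $f_*\Omega = V G'\,\eta^m$ on $B\backslash S'$ with $V$ a constant, taking $-i\d\dbar\log$ of the smooth density against $\eta^m$ gives $\Ric(f_*\Omega) = -i\d\dbar\log G' + \Ric\eta$, hence $f^*\Ric(f_*\Omega) = -i\d\dbar\log(f^*G') + f^*\Ric\eta$. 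Substituting, the displayed identity becomes
\[
f^*\bigl(-\Ric(f_*\Omega) + \omega_{WP,\lambda}\bigr) = \frac{1}{1-e^{-T}}\,f^*\eta .
\]
Finally, because $f$ restricts to a submersion $X\backslash S \to B\backslash S'$, the pullback $f^*$ is injective on $(1,1)$-forms over $B\backslash S'$, so this descends to the claimed identity $-\Ric(f_*\Omega) + \omega_{WP,\lambda} = \frac{1}{1-e^{-T}}\eta$ on $B\backslash S'$.

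I do not expect a serious obstacle here; the argument is essentially a chain linking Proposition~\ref{ric_of_fibration_volume_form}, Lemma~\ref{pullback_G'_is_G}, and the defining relations of $\Omega$, $\chi$, and $G'$. The points that need a little care are: (i) keeping the constants straight, in particular the coincidence $\lambda = e^{-T}/(1-e^{-T})$ that makes the two $\omega_0$ terms cancel exactly; (ii) checking that the Ricci form of the pushforward volume form is legitimately computed as $-i\d\dbar\log$ of the smooth density of $f_*\Omega$, which is valid precisely on $B\backslash S'$ where $f_*\Omega = \int_{X_b}\Omega$ is a genuine smooth volume form; and (iii) the final descent, which relies on $f$ being a submersion after the critical locus $S$ and its image $S'$ have been removed.
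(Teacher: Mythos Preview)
Your proposal is correct and follows essentially the same route as the paper: both arguments combine Lemma~\ref{pullback_G'_is_G}, Proposition~\ref{ric_of_fibration_volume_form} applied with $\theta=\eta$, and the defining relation~\eqref{choice_of_Omega} to obtain an identity of $(1,1)$-forms on $X\backslash S$, and then descend to $B\backslash S'$ using that $f$ is a submersion there. The only cosmetic difference is packaging: the paper isolates $i\d\dbar\log G'$ as the quantity to compute, whereas you organise the same computation as ``evaluate $\Ric\Omega$ two ways'', but the underlying chain of equalities is identical.
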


\begin{proof}
We have
\begin{align*}
    i\d\dbar\log f_* \Omega = i\d\dbar\log G' - \Ric \eta.
\end{align*}
But note that on $X\backslash S$ the two previous results, Lemma \ref{pullback_G'_is_G} and Proposition \ref{ric_of_fibration_volume_form}, as well as the choice of $\Omega$, (\ref{choice_of_Omega}), give
\begin{align*}
    f^*i\d_B\dbar_B \log G' &= i\d_X\dbar_X \log f^*G'\\
    &= i\d_X\dbar_X\log G\\
    &= \frac{1}{1-e^{-T}}f^*\eta - \lambda \omega_0 - i\d_X \dbar_X \log(\omega_{SPR}^{n-m} \wedge f^*\eta^m)\\
    &= \frac{1}{1-e^{-T}} f^*\eta + f^*\Ric\eta - f^*\omega_{WP,\lambda}.
\end{align*}
Then as $f$ is a submersion on $X \backslash S$ we deduce
\begin{equation*}
    i\d\dbar\log G' = \frac{1}{1-e^{-T}} \eta + \Ric \eta - \omega_{WP,\lambda}
\end{equation*}
and the result follows.
\end{proof}

\noindent Now we arrive at the main theorem.

\begin{theorem}
The base space metric $\omega_B$ satisfies the following twisted K\"ahler-Einstein equation on $B\backslash S'$
\begin{equation}\label{twisted_KE}
    \Ric \omega_B = -\omega_B -\lambda \eta + \omega_{WP,\lambda}.
\end{equation}
\end{theorem}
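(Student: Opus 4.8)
The plan is to differentiate the complex Monge-Ampere equation \eqref{base_space_CMA} defining $\omega_B$ and feed in the two structural identities already established. On $B\backslash S'$ the metric $\omega_B = \eta + i\d\dbar\rho_B$ is smooth, so $\Ric\omega_B = -i\d\dbar\log\omega_B^m$, and applying $-i\d\dbar\log$ to both sides of $\omega_B^m = G'e^{\rho_B}\eta^m$ gives
\[
\Ric\omega_B = -i\d\dbar\log G' - i\d\dbar\rho_B - i\d\dbar\log\eta^m = -i\d\dbar\log G' - i\d\dbar\rho_B + \Ric\eta .
\]

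Next I would substitute for the two remaining terms. The term $i\d\dbar\rho_B$ is simply $\omega_B - \eta$ by definition of $\omega_B$. For $i\d\dbar\log G'$, the computation carried out inside the proof of Proposition \ref{WPL_FS_equation} (which combines Lemma \ref{pullback_G'_is_G}, Proposition \ref{ric_of_fibration_volume_form}, the choice \eqref{choice_of_Omega} of $\Omega$, and the fact that $f$ is a submersion over $B\backslash S'$) shows
\[
i\d\dbar\log G' = \frac{1}{1-e^{-T}}\eta + \Ric\eta - \omega_{WP,\lambda}
\]
on $B\backslash S'$; equivalently this is just the statement of Proposition \ref{WPL_FS_equation} rewritten via $f_*\Omega = VG'\eta^m$. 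Plugging both identities into the previous display and cancelling the two copies of $\Ric\eta$ yields $\Ric\omega_B = -\tfrac{1}{1-e^{-T}}\eta + \eta + \omega_{WP,\lambda} - \omega_B$.

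Finally I would simplify the coefficient of $\eta$: since $\lambda = \frac{e^{-T}}{1-e^{-T}}$ we have $1 - \frac{1}{1-e^{-T}} = -\lambda$, so the last display becomes exactly $\Ric\omega_B = -\omega_B - \lambda\eta + \omega_{WP,\lambda}$, which is \eqref{twisted_KE}. There is no genuine obstacle at this stage: all of the analytic content has been front-loaded into Propositions \ref{ric_of_fibration_volume_form} and \ref{WPL_FS_equation} and into the existence and regularity of $\rho_B$ from \cite[Thm.~3.2]{ST12}. The only points requiring care are the sign conventions in the identity $\Ric(\cdot) = -i\d\dbar\log(\cdot)$ for a volume form, the restriction to the regular locus $B\backslash S'$ (where $\rho_B$, $\eta$, and hence $\omega_B$ are smooth, so that $\Ric\omega_B$ is the classical Ricci form and the manipulations are legitimate), and the bookkeeping of the normalization constant $\frac{1}{1-e^{-T}} = \lambda + 1$.
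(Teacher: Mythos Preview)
Your proof is correct and is essentially identical to the paper's own argument: differentiate the Monge--Amp\`ere equation \eqref{base_space_CMA} to get $\Ric\omega_B = -i\d\dbar\log G' - i\d\dbar\rho_B + \Ric\eta$, then substitute the formula for $i\d\dbar\log G'$ from the proof of Proposition~\ref{WPL_FS_equation} and use $i\d\dbar\rho_B = \omega_B - \eta$. The paper merely leaves the final algebraic simplification $1 - \tfrac{1}{1-e^{-T}} = -\lambda$ implicit, whereas you spell it out.
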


\begin{proof}
From the complex Monge-Ampere equation for $\omega_B$, (\ref{base_space_CMA}), we see that
\begin{align*}
    \Ric \omega_B &= -i\d\dbar  \log G' -i\d\dbar \rho_B +\Ric \eta.
\end{align*}
Combining this with the equation
\begin{align*}
    -i\d\dbar \log G' = -\frac{1}{1-e^{-T}} \eta - \Ric\eta + \omega_{WP,\lambda}
\end{align*}
from the proof of the previous result, Prop. \ref{WPL_FS_equation}, we have the desired result.
\end{proof}

\noindent Consequently, we can find an expression for the Ricci curvature of the volume form $\Omega_{\omega_{SPR},\omega_B}$ defined below. This can be useful in reducing the K\"ahler-Ricci flow to a complex Monge-Ampere flow when the map $f:X \to B$ is a submersion, since one needs to choose a suitable volume form using the cohomological equation $[f^*\omega_B] = [f^*\eta] = [\omega(T)]$, dependent on the reference metric.

\begin{corollary}
The volume form
\begin{equation*}
    \Omega_{\omega_{SPR},\omega_B} = {n \choose m} \omega^{n-m}_{SPR} \wedge f^*\omega_B^m,
\end{equation*}
satisfies the following equation on $X \backslash S$,
\begin{align}\label{base_metric_1_volume_form_ric_curvature}
    f^*\omega_B = e^{-T} \omega_{SPR}  - (1-e^{-T})\Ric(e^{\lambda(f^*\rho_B-\rho_{SPR})} \Omega_{\omega_{SPR},\omega_B}).
\end{align}
\end{corollary}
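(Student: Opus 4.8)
The plan is a short direct computation built on two results already established: Proposition \ref{ric_of_fibration_volume_form} and the twisted K\"ahler--Einstein equation \eqref{twisted_KE}. Everything takes place on $X\backslash S$, where $f$ is a submersion and $\omega_{SPR}$, $\omega_B$ are smooth K\"ahler metrics, so $\Omega_{\omega_{SPR},\omega_B}$ and hence $e^{\lambda(f^*\rho_B-\rho_{SPR})}\Omega_{\omega_{SPR},\omega_B}$ are genuine smooth positive volume forms and their Ricci forms are defined. First I would apply the scaling rule $\Ric(e^\phi\Theta)=\Ric\Theta - i\d\dbar\phi$, valid for any positive volume form $\Theta$, to write
\[
\Ric\left(e^{\lambda(f^*\rho_B-\rho_{SPR})}\Omega_{\omega_{SPR},\omega_B}\right) = \Ric\left(\Omega_{\omega_{SPR},\omega_B}\right) - \lambda\, i\d\dbar\left(f^*\rho_B-\rho_{SPR}\right).
\]

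For the first term, since the constant ${n\choose m}$ is annihilated by $i\d\dbar\log$, Proposition \ref{ric_of_fibration_volume_form} with $\theta=\omega_B$ gives $\Ric(\Omega_{\omega_{SPR},\omega_B}) = \lambda\omega_0 + f^*\Ric\omega_B - f^*\omega_{WP,\lambda}$; substituting the twisted K\"ahler--Einstein equation $\Ric\omega_B = -\omega_B - \lambda\eta + \omega_{WP,\lambda}$ cancels the $f^*\omega_{WP,\lambda}$ contributions and leaves $\Ric(\Omega_{\omega_{SPR},\omega_B}) = \lambda\omega_0 - f^*\omega_B - \lambda f^*\eta$. For the second term I would simply expand $i\d\dbar f^*\rho_B = f^*\omega_B - f^*\eta$ and $i\d\dbar\rho_{SPR} = \omega_{SPR} - \omega_0$ from the definitions $\omega_B = \eta + i\d\dbar\rho_B$ and $\omega_{SPR} = \omega_0 + i\d\dbar\rho_{SPR}$.

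Adding the two pieces, the $\lambda\omega_0$ and $\lambda f^*\eta$ terms cancel and one is left with
\[
\Ric\left(e^{\lambda(f^*\rho_B-\rho_{SPR})}\Omega_{\omega_{SPR},\omega_B}\right) = -(1+\lambda)\,f^*\omega_B + \lambda\,\omega_{SPR}.
\]
I would then close the argument by inserting $\lambda = \frac{e^{-T}}{1-e^{-T}}$, which yields $1+\lambda = \frac{1}{1-e^{-T}}$ and $(1-e^{-T})\lambda = e^{-T}$; multiplying the last display by $-(1-e^{-T})$ gives $f^*\omega_B - e^{-T}\omega_{SPR}$ on the left, and rearranging produces exactly \eqref{base_metric_1_volume_form_ric_curvature}.

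There is no real obstacle here: the corollary is engineered so that the Weil--Petersson term drops out once Proposition \ref{ric_of_fibration_volume_form} and \eqref{twisted_KE} are combined. The only points needing care are (i) that all identities are used on $X\backslash S$, where smoothness and the submersion property make $\Ric$ of the volume form well defined and allow pulling back Ricci forms as in the proof of Proposition \ref{WPL_FS_equation}, and (ii) careful tracking of the constants $\lambda$ and $e^{-T}$ so that the final coefficients match the statement.
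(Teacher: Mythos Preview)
Your proposal is correct and follows essentially the same approach as the paper: both combine Proposition~\ref{ric_of_fibration_volume_form} with the twisted K\"ahler--Einstein equation~\eqref{twisted_KE} to obtain $\Ric\Omega_{\omega_{SPR},\omega_B}=\lambda\omega_0-f^*\omega_B-\lambda f^*\eta$, and then use the potential identities $i\d\dbar\rho_{SPR}=\omega_{SPR}-\omega_0$, $i\d\dbar f^*\rho_B=f^*\omega_B-f^*\eta$ together with the scaling rule for $\Ric$. The only cosmetic difference is that the paper verifies the identity by writing $f^*\omega_B$ on the left and recognizing the correction as $e^{-T}i\d\dbar(f^*\rho_B-\rho_{SPR})$, whereas you compute $\Ric(e^{\lambda(f^*\rho_B-\rho_{SPR})}\Omega_{\omega_{SPR},\omega_B})$ directly and then rescale; the content is identical.
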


\begin{proof}
By the application of Prop. \ref{ric_of_fibration_volume_form} and the twisted K\"ahler-Einstein equation, (\ref{twisted_KE}), we have
\begin{align*}
    \Ric \Omega_{\omega_{SPR},\omega_B} &= \lambda \omega_0 + f^* \Ric \omega_B - f^* \omega_{WP,\lambda}\\
    &= \lambda \omega_0 - f^*\omega_B - \lambda f^*\eta.
\end{align*}
Therefore,
\begin{align*}
    f^*\omega_B &= e^{-T} \omega_{SPR} + e^{-T}(\omega_0 - \omega_{SPR} + f^*\omega_B - f^*\eta) - (1-e^{-T})\Ric \Omega_{\omega_{SPR},\omega_B}\\
    &= e^{-T} \omega_{SPR} + e^{-T}i\d\dbar(f^*\rho_B - \rho_{SPR}) - (1-e^{-T}) \Ric \Omega_{\omega_{SPR},\omega_B}\\
    &=e^{-T} \omega_{SPR}  - (1-e^{-T})\Ric(e^{\lambda(f^*\rho_B-\rho_{SPR})} \Omega_{\omega_{SPR},\omega_B}).
\end{align*}
\end{proof}

\noindent We make the observation that
\begin{equation*}
    f^*\omega_B = e^{-T} \omega_{SPR}  - (1-e^{-T}) \Ric \Omega_{\omega_{SPR},\omega_B}
\end{equation*}
holds on $X \backslash S$ if and only if
\begin{equation*}
    \omega_{SPR} - \omega_0 = f^*\omega_B - f^*\eta
\end{equation*}
holds on $X \backslash S$ or if 
\begin{equation*}
    \rho_{SPR} - f^*\rho_B = C.
\end{equation*}
This is a seemingly strong condition but is of interest in \cite{B25}. At the minimum, it implies that $(X_b,\omega_{0,b})$ is K\"ahler-Einstein for all $b \in B \backslash S'$, i.e. $\omega_{SPR,b} = \omega_{0,b}$ and so $\Ric \omega_{0,b} = \lambda \omega_{0,b}$.\\

\noindent Actually, the appearance of $-\lambda \eta$ in the twisted K\"ahler-Einstein equation is superficial. If we instead consider the following complex Monge-Ampere equation on $B$,
\begin{equation*}
    \left(\frac{1}{1-e^{-T}}\eta + i\d\dbar \rho_B'\right)^m =  G' e^{\rho_B'} \left(\frac{1}{1-e^{-T}}\eta\right)^m
\end{equation*}
then, again, for some $\delta, \varepsilon > 0 $ such that $0 < \delta \leq G' \in L^{1+\e}(B,\frac{1}{1-e^{-T}}\eta)$ and, by Remark \ref{base_space_CMA_remark}, the complex Monge-Ampere equation has a unique solution $\rho_B' \in \PSH(B,\frac{1}{1-e^{-T}}\eta) \cap C^0(B) \cap C^{\infty}(B\backslash S')$ where
\begin{equation*}
    \omega'_B := \frac{1}{1-e^{-T}}\eta + i\d\dbar \rho_B'
\end{equation*}
is also a K\"ahler metric. The same calculations shows that, in fact,
\begin{equation*}
    \Ric \omega'_B = -\omega'_B + \omega_{WP,\lambda}.
\end{equation*}

\begin{remark}
One may be surprised that despite working with a Fano fibration we can construct a singular K\"ahler metric on the base space that satisfies the negative twisted K\"ahler-Einstein equation. However, only the fibres are Fano and it seems to be a consequence of the geometric setting that the base is not necessarily so, additionally, the positivity of the $(1,1)$-form $\omega_{WP,\lambda}$ is unknown. 
\end{remark}

\noindent Furthermore, we have
\begin{equation}\label{base_metric_2_volume_form_ric_curvature}
    (1-e^{-T})f^*\omega_B' = e^{-T} \omega_{SPR} - (1-e^{-T}) \Ric\left(e^{-\lambda \rho_{SPR}} \Omega_{\omega_{SPR},\omega'_B}\right)
\end{equation}
on $X \backslash S$, where the $e^{-\lambda \rho_{SPR}}$ term vanishes if and only if $\omega_{SPR} = \omega_0$ holds on $X \backslash S$ or if $\rho_{SPR} = C$. Again, this implies that $(X_b,\omega_{0,b})$ is K\"ahler-Einstein for all $b \in B \backslash S'$, since it follows that $\omega_{SPR,b} = \omega_{0,b}$ and $\Ric \omega_{0,b} = \lambda \omega_{0,b}$. The condition $\omega_{SPR} = \omega_0$ is of interest in the second paper \cite{B25}.\\
Finally, it is natural to ask what the relation is, if any, between $\omega_B$ and $\omega_B'$.

\section{Cohomological Consequences for a Submersion}

\noindent Let us assume in this section that the map $f: X \to B$ is now a submersion of compact K\"ahler manifolds, i.e. $S' = \varnothing$. In this case, the semi-prescribed Ricci curvature form $\omega_{SPR}$, $(1,1)$-form $\omega_{WP,\lambda}$ and base space metrics $\omega_B$, are globally defined smooth forms on the base $B$ and we are able to consider the cohomology of the twisted K\"ahler-Einstein equation (\ref{twisted_KE}), noting that $d \omega_{WP,\lambda} = 0$ by construction. One finds
\begin{equation*}
    -2\pi c_1(B) + [\omega_{WP,\lambda}] = \frac{1}{1-e^{-T}}[\eta].
\end{equation*}
Alternatively, one can take the cohomology class of equation (\ref{WPL_FS_equation}), as $f_*\Omega$ is now becomes a global volume form on $B$, or the twisted K\"ahler-Einstein equation for $\omega_B'$.\\ 

\noindent We also present an algebro-geometric proof.

\begin{theorem}\label{base_space_cohomology}
Let $f: X \to B$ be a submersion of compact K\"ahler manifolds, then
\begin{equation*}
    -2\pi c_1(B) + [\omega_{WP,\lambda}] = \frac{1}{1-e^{-T}}[\eta].
\end{equation*}
\end{theorem}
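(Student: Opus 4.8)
The plan is to identify the line bundle whose curvature gives each term in the claimed identity, and then read off the relation at the level of first Chern classes. The three protagonists are $\eta = \frac{1}{k}\omega_{FS}|_B$, whose class is $\frac{1}{k}c_1(\OO_B(1)) = \frac{1}{k}\cdot\frac{1}{k'}\cdot k' c_1(\OO_B(1))$; since $kD = f^*\OO_B(1)$ and $k' = k(1-e^{-T})$, one has $[\eta] = \frac{1}{k'}\cdot\frac{1}{1-e^{-T}}\cdot\frac{1}{\,}\!\ldots$ — more cleanly, $2\pi c_1(D) = [\omega(T)] = [f^*\eta]$ after dividing by the appropriate multiple, so $[\eta]$ is (a rational multiple of) the pushforward-relevant class $2\pi c_1\big(\tfrac{1}{1-e^{-T}}D\big)$ restricted correctly to $B$. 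The second protagonist is $\omega_{WP,\lambda} = \Ric h_{WP,\lambda}$, the curvature of the singular Hermitian metric on the line bundle $f_* D^{k'}_{X/B}$, so $[\omega_{WP,\lambda}] = 2\pi c_1(f_* D^{k'}_{X/B})$ (as a $\Q$-class, dividing by $k'$ to pass to $D_{X/B}$). The third is $-2\pi c_1(B) = 2\pi c_1(K_B)$.

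First I would recall the definition $D_{X/B} = \lambda L + K_{X/B} = \lambda L + K_X - f^*K_B$ from (\ref{associated_relative_bundle}), and the fibrewise identity $k' D_{X/B}|_{X_b} = kD_b = \OO_{X_b}$, which forced $f_*D^{k'}_{X/B}$ to be a line bundle. The key computational input is a formula for $c_1\big(f_* D^{k'}_{X/B}\big)$ in terms of data on $X$. Since $D^{k'}_{X/B}$ restricts trivially to each fibre, its pushforward line bundle is, up to the usual discrepancy, computed by a Grothendieck–Riemann–Roch / fibre-integration argument: for a line bundle $\mathcal{L}$ on $X$ with $\mathcal{L}|_{X_b}\cong\OO_{X_b}$ and $f$ a submersion of relative dimension $n-m$, one has $c_1(f_*\mathcal{L}) = f_*\!\big(\ch(\mathcal{L})\,\td(T_{X/B})\big)_{(1,1)}$, and the leading term involving $c_1(\mathcal{L})^{n-m+1}$ vanishes fibrewise. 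I would instead prefer the more hands-on route: the natural singular Hermitian metric $h_{WP,\lambda}$ on $f_*D^{k'}_{X/B}$ has, by Proposition \ref{ric_of_fibration_volume_form} applied with $\theta$ a Kähler metric on $B$, the explicit curvature relation $f^*\omega_{WP,\lambda} = \lambda\omega_0 + f^*\Ric\theta - \Ric(\omega_{SPR}^{n-m}\wedge f^*\theta^m)$; taking cohomology classes on $X$, using $2\pi c_1(K_X) = -[\Ric(\text{any volume form})]$ and $\Ric(\omega_{SPR}^{n-m}\wedge f^*\theta^m)$ representing $-2\pi c_1(X)$, and that $f^*$ is injective on $H^{1,1}$ for a submersion with connected fibres, yields $[\omega_{WP,\lambda}] = \lambda[\omega_0] + 2\pi f^*c_1(B) + 2\pi c_1(X) \big/ f^*$ — i.e., after descending, $f^*[\omega_{WP,\lambda}] = \lambda[\omega_0] - 2\pi c_1(X) + 2\pi f^* c_1(B)$, which is precisely the second display of the Corollary. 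This is the algebraic incarnation of Proposition \ref{ric_of_fibration_volume_form}, phrased via line bundles rather than currents.

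From there the first identity of the Corollary, which is Theorem \ref{base_space_cohomology}, follows by rearranging: $-2\pi c_1(B) + [\omega_{WP,\lambda}]$; substitute $[\omega_{WP,\lambda}] = \lambda[\omega_0] + 2\pi c_1(X)|_{\text{descended}} + 2\pi f^*c_1(B)$ pulled back to $X$, use $2\pi c_1(X) = 2\pi c_1(K_X^{-1})$ and the relation $D = e^{-T}L + (1-e^{-T})K_X$, hence $2\pi c_1(D) = e^{-T}[\omega_0] - (1-e^{-T})2\pi c_1(X)$, i.e. $2\pi c_1(X) = \frac{1}{1-e^{-T}}\big(e^{-T}[\omega_0] - 2\pi c_1(D)\big) = \frac{1}{1-e^{-T}}\big(e^{-T}[\omega_0] - [f^*\eta]\cdot(\ldots)\big)$; together with $\lambda = \frac{e^{-T}}{1-e^{-T}}$ and $[f^*\eta] = [\omega(T)] = e^{-T}[\omega_0] - (1-e^{-T})2\pi c_1(X)$, all the $[\omega_0]$ and $c_1(X)$ contributions cancel after pulling back, leaving exactly $f^*\big(-2\pi c_1(B) + [\omega_{WP,\lambda}]\big) = \frac{1}{1-e^{-T}}[f^*\eta]$, and injectivity of $f^*$ gives the result on $B$.

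The main obstacle is the identification $[\omega_{WP,\lambda}] = 2\pi c_1\big(\tfrac{1}{k'}f_* D^{k'}_{X/B}\big)$ with the \emph{correct} rational coefficient and, relatedly, making the fibre-integration/GRR step rigorous: one must be careful that $f_*D^{k'}_{X/B}$ is genuinely a line bundle (already established in the excerpt via \cite[Thm I.8.5.iii]{BHPV04}), that the singular metric $h_{WP,\lambda}$ extends across nothing problematic in the submersion case (here $S' = \varnothing$, so $h_{WP,\lambda}$ is smooth), and that the pushforward of $\ch(D^{k'}_{X/B})\td(T_{X/B})$ in degree $2$ really does reduce to $f_*c_1(D^{k'}_{X/B})^{\wedge(n-m+1)}/(n-m+1)!$ plus a $\td$-correction that, after dividing by $k'$, is absorbed into the $K_{X/B}$ bookkeeping already visible in (\ref{associated_relative_bundle}). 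In practice I would sidestep GRR entirely and take the cohomology class of the pointwise curvature identity in Proposition \ref{ric_of_fibration_volume_form} — that proposition is exactly the analytic content needed, and in the submersion case every form in it is globally defined and smooth on $B$ — so the ``algebro-geometric proof'' is really just the observation that Proposition \ref{ric_of_fibration_volume_form} is a statement about Chern–Weil representatives of the line bundles $L$, $K_{X/B}$, $K_B$, and $f_*D^{k'}_{X/B}$, and the theorem is its shadow in $H^2(B;\R)$.
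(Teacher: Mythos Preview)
Your route---take cohomology of Proposition~\ref{ric_of_fibration_volume_form} on $X$ (where, in the submersion case, $\omega_{SPR}^{n-m}\wedge f^*\theta^m$ is a global smooth volume form, so its Ricci form represents $2\pi c_1(X)$), obtain $f^*[\omega_{WP,\lambda}]=\lambda[\omega_0]+2\pi f^*c_1(B)-2\pi c_1(X)$, then substitute $[f^*\eta]=e^{-T}[\omega_0]-(1-e^{-T})\,2\pi c_1(X)$ and descend via injectivity of $f^*$---is correct, modulo a momentary sign slip (``representing $-2\pi c_1(X)$'') that you self-correct two lines later. However, this is not the paper's proof of Theorem~\ref{base_space_cohomology}. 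The paper explicitly flags this analytic route in the paragraph \emph{preceding} the theorem (via the twisted K\"ahler--Einstein equation or Proposition~\ref{WPL_FS_equation}) and then offers a separate algebro-geometric argument as the labeled proof. That argument never leaves $B$: from $D^k=D_{X/B}^{k'}\otimes f^*K_B^{k'}$ and the projection formula (using $f_*\OO_X=\OO_B$) one gets the line-bundle identity $\OO_B(1)=f_*D^k=f_*D_{X/B}^{k'}\otimes K_B^{k'}$ on $B$; taking $c_1$ and inserting $[\omega_{WP,\lambda}]=\tfrac{2\pi}{k'}c_1(f_*D_{X/B}^{k'})$ and $[\eta]=\tfrac{2\pi}{k}c_1(\OO_B(1))$ finishes. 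Your GRR digression is a red herring for both approaches---the relevant pushforward step is the one-line projection formula, not a Todd-class computation. The paper's version buys a proof independent of the analytic Propositions~\ref{ric_of_fibration_volume_form} and~\ref{WPL_FS_equation} and avoids the appeal to injectivity of $f^*$; your version, while valid, is precisely the route the paper dispatches in one sentence before presenting the alternative.
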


\begin{proof}
Recall the associated relative bundle, $D_{X/B}$, is defined to be the $\Q$-line bundle
\begin{equation*}
    D_{X/B} = \frac{1}{1-e^{-T}} D - f^* K_B.
\end{equation*}
It follows that
\begin{equation*}
    D^k = D_{X/B}^{k'} \otimes f^*( K_B^{k'})
\end{equation*}
since $k' = k(1-e^{-T})$. Then, as the map $f$ has connected fibres, i.e. $f_* \OO_X = \OO_B$, we know by the projection formula for direct images, \cite[Ex. 5.1]{H77}, that $f_* f^* \OO_B(1) = \OO_B(1)$ and
\begin{equation*}
    \OO_B(1) = f_* D^k = f_* D_{X/B}^{k'} \otimes K_B^{k'}.
\end{equation*}
Therefore
\begin{equation*}
    2\pi c_1(\OO_B(1)) = 2\pi c_1(f_* D_{X/B}^{k'}) - 2\pi k' c_1(B)
\end{equation*}
and the result follows by using the fact that
\begin{equation*}
    \frac{2\pi}{k'} c_1(f_* D_{X/B}^{k'}) = [\Ric h_{WP,\lambda}] = [\omega_{WP,\lambda}]
\end{equation*}
and $[\eta] = \frac{2\pi}{k} c_1(\OO_B(1))$ and $k' = k(1-e^{-T})$.
\end{proof}

\noindent We immediately get the following.

\begin{corollary}
If $f:X \to B$ is a submersion of compact K\"ahler manifolds then
\begin{equation*}
    2\pi c_1(X) = \lambda [\omega_0] + 2\pi f^* c_1(B) - [f^*\omega_{WP,\lambda}].
\end{equation*}
\end{corollary}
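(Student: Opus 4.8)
The plan is to obtain this as a formal consequence of Theorem \ref{base_space_cohomology} together with the cohomology class of $\omega(T)$ recorded in Section 2. Since $f$ is a submersion of compact K\"ahler manifolds we have $S' = \varnothing$, every form involved is globally defined and smooth, $d\omega_{WP,\lambda} = 0$ by construction, and the smooth pullback $f^* : H^2(B;\R) \to H^2(X;\R)$ carries $[\omega_{WP,\lambda}]$ to $[f^*\omega_{WP,\lambda}]$ and $2\pi c_1(B)$ to $2\pi f^* c_1(B)$; so the argument is entirely an identity in cohomology.

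First I would recall that $[f^*\eta] = [\omega(T)] = e^{-T}[\omega_0] - (1-e^{-T})2\pi c_1(X)$, and hence, using $\lambda = e^{-T}/(1-e^{-T})$,
\begin{equation*}
    \frac{1}{1-e^{-T}}[f^*\eta] = \lambda[\omega_0] - 2\pi c_1(X).
\end{equation*}
Then I would apply $f^*$ to the identity of Theorem \ref{base_space_cohomology},
\begin{equation*}
    -2\pi c_1(B) + [\omega_{WP,\lambda}] = \frac{1}{1-e^{-T}}[\eta],
\end{equation*}
obtaining $-2\pi f^* c_1(B) + [f^*\omega_{WP,\lambda}] = \tfrac{1}{1-e^{-T}}[f^*\eta]$, and substitute the previous display. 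Rearranging the resulting equation for $2\pi c_1(X)$ gives exactly
\begin{equation*}
    2\pi c_1(X) = \lambda[\omega_0] + 2\pi f^* c_1(B) - [f^*\omega_{WP,\lambda}].
\end{equation*}

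There is no real obstacle here: the only care needed is bookkeeping with signs and with the factor $1-e^{-T}$, together with the remark that all classes lie in $H^{1,1}$ so that the $i\d\dbar$-exact corrections drop out. If one prefers a derivation paralleling the algebro-geometric proof of Theorem \ref{base_space_cohomology}, the alternative is to take first Chern classes in $D_{X/B} = \lambda L + K_{X/B}$ with $K_{X/B} = K_X - f^* K_B$, which gives $2\pi c_1(D_{X/B}) = \lambda[\omega_0] - 2\pi c_1(X) + 2\pi f^* c_1(B)$, and then to identify $[f^*\omega_{WP,\lambda}] = 2\pi c_1(D_{X/B})$. This last identification is the genuinely substantive point; it holds because the local frames $f^*\Psi$ exhibit $D_{X/B}^{k'}$ as $f^*(f_* D_{X/B}^{k'})$, so that $c_1(D_{X/B}) = \tfrac{1}{k'} f^* c_1(f_* D_{X/B}^{k'})$, which combined with $[\omega_{WP,\lambda}] = \tfrac{2\pi}{k'} c_1(f_* D_{X/B}^{k'})$ from Section 3 yields the claim.
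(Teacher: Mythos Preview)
Your main argument is correct and is exactly the paper's proof: pull back Theorem \ref{base_space_cohomology} by $f^*$ and combine it with $[f^*\eta] = e^{-T}[\omega_0] - (1-e^{-T})\,2\pi c_1(X)$, then rearrange using $\lambda = e^{-T}/(1-e^{-T})$. The alternative derivation you sketch via $c_1(D_{X/B})$ and the identification $D_{X/B}^{k'} \cong f^*(f_* D_{X/B}^{k'})$ is also sound but goes beyond what the paper records for this corollary.
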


\begin{proof}
Apply the previous result to
\begin{equation*}
    [f^*\eta] = [\omega(T)] = e^{-T} [\omega_0] - (1-e^{-T}) 2\pi c_1(X).
\end{equation*}
\end{proof}

\section{Constructions for a K\"ahler-Einstein Fibration}

\noindent Let us assume again that $f: X \to B$ may have singular fibres. Recall that the conditions $\omega_{SPR} - \omega_0 = f^*\omega_B - f^*\eta$ and $\omega_{SPR} = \omega_0$ appeared to be of interest earlier in relation to equations (\ref{base_metric_1_volume_form_ric_curvature}) and (\ref{base_metric_2_volume_form_ric_curvature}) for the purpose of extracting some nice behaviour for the K\"ahler-Ricci flow via a nice choice of volume form for the complex Monge-Ampere equation. Unfortunately, these conditions are particularly strong, they imply that the initial metric needs to restrict to a K\"ahler-Einstein metric on each fibre. If we instead assume the fibres admit a smoothly varying family of K\"ahler-Einstein metrics, akin to $\omega_{SPR}$ or the semi-Ricci flat form $\omega_{SRF}$, which do not necessarily need to be the family $(\omega_{0,b})_{b \in B \backslash S'}$, then we can still extract the same behaviour for the K\"ahler-Ricci flow, \cite[Thm. 1.2]{B25}. However, one must alter the constructions of the $(1,1)$-form $\omega_{WP,\lambda}$ and twisted K\"ahler-Einstein metrics $\omega_{B}$, $\omega_{B'}$. We shall summarize the differences in this section.\\

\noindent To be precise, we assume there exists a $p > 1$ and a $\rho_{SKE} \in C^{\infty}(X\backslash S)$ such that $e^{-\lambda \rho_{SKE}} \in L^p(X,\Omega)$ for some fixed volume form $\Omega$ and $\rho_{SKE}$ can have a logarithmic pole of a small order from above along $S$, (cf. \cite[\S 3.1]{BBEGZ19}, \cite{CSYZ22}). We denote $\omega_{SKE}$ to be the $d$-closed, real $(1,1)$-form $\omega_{SKE} = \omega_0 + i\d\dbar \rho_{SKE}$ on $X \backslash S$, and require that its restriction to each fibre, $X_b$, is a K\"ahler metric denoted $\omega_{SKE,b} := \omega_{SKE}|_{X_b}$ which satisfies
\begin{equation*}
    \Ric \omega_{SKE,b} = \lambda \omega_{SKE,b}.
\end{equation*}
That is, $\omega_{SKE,b}$ is K\"ahler-Einstein and we shall refer to $\omega_{SKE}$ as the semi K\"ahler-Einstein form. We note that the extra regularity assumptions on $\rho_{SKE}$ are required in order to construct suitable base space metrics $\omega_B$, $\omega_B'$.\\

\noindent First, we construct a smooth Hermitian metric $h_{SKE}$ for $L$ over $X\backslash S$ with fibre-wise curvature $\omega_{SKE,b}$. Let us call it the semi K\"ahler-Einstein Hermitian metric for $L$. Previously, as the initial line bundle $L$ satisfies $2\pi c_1(L) = [\omega_0]$, we chose $h_L$ to be a Hermitian metric such that $\Ric h_L = \omega_0$ and it was unique up to a multiplicative constant, i.e. for any other Hermitian metric $h'_L$ such that $\Ric h'_L = \omega_0$, $h'_L = C h_L$ for some $C > 0$. We define $h_{SKE}$ using $h_L$ by
\begin{equation*}
    h_{SKE} = e^{-\rho_{SKE}} h_L
\end{equation*}
so that for all $b \in B \backslash S'$
\begin{equation*}
    \Ric h_{SKE,b} = \Ric h_L|_{X_b}  + i\d\dbar \rho_{SKE,b} = \omega_{SKE,b},
\end{equation*}
where $h_{SKE,b}$ denotes the restriction of the $h_{SKE}$ to the line bundle $L|_{X_b}$.\\
We note that the Hermitian metric $h_{SKE}$ is unique up to a multiplicative constant in the same manner as $h_L$ but, as before, this will not affect the definition of $\omega_{WP,\lambda}$ and simply scales the family of volume forms $\Omega_{\Psi_b}$ by the same fixed constant.\\

\noindent Now the semi K\"ahler-Einstein Hermitian metric, $h_{SKE}$, essentially takes the role of $h_{L}$, and $\omega_{SKE}$ replaces $\omega_{SPR}$. For the $(1,1)$-form $\omega_{WP,\lambda}$, take a holomorphically varying family $\Psi_b \in H^0(X_b,D^k_b)$ of global, non-vanishing, holomorphic sections for $b \in U'$ for some open set $U' \subseteq B$. Let us write $\Psi_b$ in local product coordinates in some open set $U \subseteq X \backslash S$ and using some holomorphic frame field $s$ for $L$ over $U$ as
\begin{equation*}
    \Psi_b = F(b,z) s^\alpha \otimes (dz^1 \wedge \dots \wedge dz^{n-m})^\beta.
\end{equation*}
Define the family of volume forms $\Omega_{\Psi_b}$ by
\begin{equation*}
    \Omega_{\Psi_b} = i^{n-m} |F(b,z)|^{\frac{2}{\beta}}|s|^{\frac{2\alpha}{\beta}}_{h_{SKE}} dz^1 \wedge \dots \wedge d\oo{z}^{n-m},
\end{equation*}
so that, as a result,
\begin{equation*}
    \Ric \Omega_{\Psi_b} = \lambda \omega_{SKE,b},
\end{equation*}
and we can define the $(1,1)$-form $\omega_{WP,\lambda}$ locally in $U'$ by 
\begin{equation*}
    \omega_{WP,\lambda}|_{U'} = - i\d\dbar\log \int_{X_b} \Omega_{\Psi_b}.\\
\end{equation*}

\noindent Furthermore, to construct the twisted K\"ahler-Einstein metrics, $\omega_{B}$, $\omega'_B$, one instead considers the real, positive function $G'$ which is smooth on $B \backslash S'$ defined by
\begin{equation*}
    G' = \frac{f_*\Omega'}{V\eta^m}
\end{equation*}
where $V = {n\choose m} \int_{X_b} \omega_{0,b}^{n-m}$ and the volume form $\Omega'$ can be chosen to satisfy
\begin{equation*}
    f^*\eta = e^{-T} \omega_{SKE} - (1-e^{-T})\Ric \Omega',
\end{equation*} 
on $X \backslash S$ by taking $\Omega' = e^{-\lambda \rho_{SKE}}\Omega$ where $\Omega$ is as before, (\ref{choice_of_Omega}).\\
In order to apply Remark \ref{base_space_CMA_remark} to construct the singular K\"ahler metrics on the base, we need to prove that $G'$ satisfies $0 < \delta \leq G' \in L^{1+\e}(B,\eta)$ for some $\delta, \e > 0$. This can be done by adapting the arguments of \cite{ST12} and Lemma \ref{pullback_G'_is_G} in the following way. First we define the real, smooth, positive function on $X\backslash S$
\begin{equation*}
    G = \frac{\Omega'}{{n \choose m} \omega_{SKE}^{n-m} \wedge f^*\eta^m}
\end{equation*}
which is constant along fibres and satisfies $G = f^* G'$ on $X \backslash S$. Next, we observe
\begin{align*}
    \int_B G'^{1+\e} \eta^m &= \frac{1}{V}\int_B G'^{\e} f_*\Omega' = \frac{1}{V}\int_X (f^* G')^{\e} \Omega'\\
    &= \frac{1}{V} \int_X e^{-(1+\e)\lambda\rho_{SKE}}\left(\frac{\Omega}{{n\choose m} \omega_{SKE}^{n-m} \wedge f^*\eta^m}\right)^\e \Omega \\
    &\leq \frac{1}{V}\left(\int_X e^{-q(1+\e)\lambda \rho_{SKE}} \Omega\right)^{1/q} \left(\int_X \left(\frac{\Omega}{{n\choose m} \omega_{SKE}^{n-m} \wedge f^*\eta^m}\right)^{r\e}\Omega\right)^{1/r}\\
    &= \frac{1}{V}\left(\int_X e^{-(q+(q-1)\e')\lambda \rho_{SKE}} \Omega\right)^{1/q} \left(\int_X \left(\frac{\Omega}{{n\choose m} \omega_{SKE}^{n-m} \wedge f^*\eta^m}\right)^{\e'}\Omega\right)^{1/r}
\end{align*}
where $\frac{1}{q}+\frac{1}{r} = 1$ and we set $\e' = r\e$. We can choose $\e' > 0$ sufficiently small such that the second integral in the last line is bounded by the same argument as \cite[Prop. 3.2]{ST12}. Then we set $q = \frac{p + \e'}{1+\e'} > 1$ so that $p = q + (q-1)\e'$ and we can apply the fact $e^{-\lambda \rho_{SKE}} \in L^p(X,\Omega)$. That is, there exists some $\e > 0$ such that $G' \in L^{1+\e}(B,\eta)$.\\
Next, note that since $\int_{X_b} \omega_{SKE,b}^{n-m} = \int_{X_b} \omega_{0,b}^{n-m}$ there exists a point $x \in X_b$ such that $\omega_{SKE,b}^{n-m}(x) = \omega_{0,b}^{n-m}(x)$ and so there exists a $\delta > 0$ such that
\begin{align*}
    G'(b) = G(x) &= \left(\frac{e^{-\lambda \rho_{SKE}}\Omega}{{n \choose m} \omega_{0}^{n-m} \wedge f^* \eta^m}\right)(x) \frac{\omega_{SKE,b}^{n-m}(x)}{\omega_{0,b}^{n-m}(x)} \geq \delta
\end{align*}
for all $b \in B \backslash S'$ if $\inf_{X\backslash S} \frac{e^{-\lambda \rho_{SKE}}\Omega}{{n \choose m} \omega_0^{n-m} \wedge f^* \eta^m} \geq \delta$. But, this holds since the denominator vanishes along $S$ and $\rho_{SKE}$ has a logarithmic pole from above of a sufficiently small order along $S$.\\ 
Therefore, the complex Monge-Ampere equation
\begin{equation*}
    (\eta + i\d\dbar \rho_B)^m = G' e^{\rho_B} \eta^m
\end{equation*}
has a unique solution $\rho_B \in \PSH(B,\eta) \cap C^0(B) \cap C^{\infty}(B\backslash S')$ such that
\begin{equation*}
    \omega_B = \eta + i\d\dbar \rho_B
\end{equation*}
is a smooth K\"ahler-metric on $B\backslash S'$.\\
We note that in order to derive the twisted K\"ahler-Einstein equation for $\omega_B$ one now needs the following version of Prop. \ref{ric_of_fibration_volume_form} where $\omega_{WP,\lambda}$ here below is, of course, the $\omega_{WP,\lambda}$ for the $\omega_{SKE}$ case.

\begin{proposition}
For any K\"ahler metric $\theta $ on $B\backslash S'$, the following equation holds on $X \backslash S$
\begin{equation*}
    \Ric(\omega_{SKE}^{n-m}\wedge f^*\theta^m) = \lambda \omega_{SKE} + f^*\Ric \theta - f^*\omega_{WP,\lambda}.
\end{equation*}
\end{proposition}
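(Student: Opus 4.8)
The plan is to transcribe the proof of Proposition~\ref{ric_of_fibration_volume_form}, the only substantive change being that $h_{SKE}$ now replaces $h_L$ and $\omega_{SKE}$ replaces $\omega_{SPR}$, so that the Hermitian curvature term contributes $\lambda\omega_{SKE}$ in place of $\lambda\omega_0$. First I would fix a holomorphically varying family $\Psi_b \in H^0(X_b, D_b^k)$ of globally non-vanishing holomorphic sections over some open $U'\subseteq B\backslash S'$, with associated volume forms $\Omega_{\Psi_b}$ defined via $h_{SKE}$ as above, so that $\Ric\Omega_{\Psi_b} = \lambda\omega_{SKE,b}$. Since $\omega_{SKE,b}$ is K\"ahler--Einstein we also have $\Ric(\omega_{SKE,b}^{n-m}) = \Ric\omega_{SKE,b} = \lambda\omega_{SKE,b}$, so on the compact manifold $X_b$ the volume forms $\Omega_{\Psi_b}$ and $\omega_{SKE,b}^{n-m}$ have identical Ricci curvature and therefore differ by a positive multiplicative constant; writing $\Omega_{\Psi_b} = f^*\mu\;\omega_{SKE,b}^{n-m}$ defines a smooth positive function $\mu$ on $U'$. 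Because $\omega_{SKE} = \omega_0 + i\d\dbar\rho_{SKE}$ on $X\backslash S$ we have $[\omega_{SKE,b}] = [\omega_{0,b}]$ independently of $b$, so $b\mapsto\int_{X_b}\omega_{SKE,b}^{n-m}$ is constant, $\mu = \left(\int_{X_b}\Omega_{\Psi_b}\right)\big/\left(\int_{X_b}\omega_{SKE,b}^{n-m}\right)$, and hence $\omega_{WP,\lambda}|_{U'} = -i\d\dbar\log\mu$.

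Next I would work in local product coordinates $(z_1,\dots,z_n)$ on an open set $U\subseteq X\backslash S$, with the first $n-m$ coordinates in the fibre direction, and a local non-vanishing holomorphic frame $s$ for $L$ over $U$, writing $\Psi_b = F(b,z)\,s^\alpha\otimes(dz^1\wedge\cdots\wedge dz^{n-m})^\beta$ and $\theta = i\sum_{i,j=n-m+1}^n\theta_{i\oo j}\,dz^i\wedge d\oo z^j$. Since $f^*\theta^m$ saturates the base directions, only the fibre block of $\omega_{SKE}$ survives the wedge, so $\omega_{SKE}^{n-m}\wedge f^*\theta^m$ is a positive $(n,n)$-form on $U$, with
\begin{equation*}
    \Omega_{\Psi_b}\wedge f^*\theta^m = i^n\,|F|^{\frac{2}{\beta}}\,|s|^{2\lambda}_{h_{SKE}}\,f^*\det(\theta_{i\oo j})\,dz^1\wedge\cdots\wedge d\oo z^n
\end{equation*}
up to an irrelevant combinatorial constant, and $f^*\mu = \Omega_{\Psi_b}/\omega_{SKE,b}^{n-m} = (\Omega_{\Psi_b}\wedge f^*\theta^m)/(\omega_{SKE}^{n-m}\wedge f^*\theta^m)$. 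Hence
\begin{equation*}
    \Ric(\omega_{SKE}^{n-m}\wedge f^*\theta^m) = -i\d\dbar\log\left(\frac{|F|^{\frac{2}{\beta}}\,|s|^{2\lambda}_{h_{SKE}}\,f^*\det(\theta_{i\oo j})}{f^*\mu}\right),
\end{equation*}
which I would expand term by term: $-i\d\dbar\log|F|^{\frac{2}{\beta}} = 0$ by holomorphicity and non-vanishing of $F$; $-\lambda\,i\d\dbar\log|s|^2_{h_{SKE}} = \lambda\,\Ric h_{SKE} = \lambda\omega_{SKE}$, using that $h_{SKE} = e^{-\rho_{SKE}}h_L$ has Chern curvature $\omega_0 + i\d\dbar\rho_{SKE} = \omega_{SKE}$ on all of $X\backslash S$; $-i\d\dbar\log f^*\det(\theta_{i\oo j}) = f^*\Ric\theta$; and $i\d\dbar\log f^*\mu = f^*(i\d\dbar\log\mu) = -f^*\omega_{WP,\lambda}$. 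Summing gives $\lambda\omega_{SKE} + f^*\Ric\theta - f^*\omega_{WP,\lambda}$, and since every form here is globally defined on $X\backslash S$ the local identities patch to the asserted global one.

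I expect the only point needing a moment's care to be the claim that $\omega_{SKE}^{n-m}\wedge f^*\theta^m$ is a positive $(n,n)$-form on $X\backslash S$, so that its Ricci form is meaningfully $-i\d\dbar\log$ of its coefficient; this is exactly where the hypotheses that $\omega_{SKE}$ restricts to a K\"ahler metric on each fibre, that $\theta$ is K\"ahler on $B\backslash S'$, and that $f$ is a submersion on $X\backslash S$ enter. Beyond this the argument is a verbatim adaptation of Proposition~\ref{ric_of_fibration_volume_form}; the two new ingredients are the fibrewise identity $\Ric(\omega_{SKE,b}^{n-m}) = \lambda\omega_{SKE,b}$ (Einstein rather than $\lambda\omega_{0,b}$) and the curvature computation $\Ric h_{SKE} = \omega_{SKE}$, which together replace $\lambda\omega_0$ by $\lambda\omega_{SKE}$ in the conclusion.
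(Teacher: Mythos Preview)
Your proposal is correct and is precisely the argument the paper intends: the proposition is stated without proof as the $\omega_{SKE}$ analogue of Proposition~\ref{ric_of_fibration_volume_form}, and your write-up carries out exactly the substitutions $h_L\rightsquigarrow h_{SKE}$, $\omega_{SPR}\rightsquigarrow\omega_{SKE}$ that the paper flags. The only details you supply beyond the original proof---namely that $\Ric h_{SKE}=\omega_{SKE}$ globally on $X\backslash S$ and that $\Ric(\omega_{SKE,b}^{n-m})=\lambda\omega_{SKE,b}$ by the Einstein condition---are straightforward and correctly handled.
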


\noindent Hence,
\begin{theorem}
The singular K\"ahler metric $\omega_B$ satisfies the following twisted K\"ahler-Einstein equation
\begin{equation*}
    \Ric \omega_B = -\omega_B - \lambda \eta + \omega_{WP,\lambda}
\end{equation*}
on $B \backslash S'$.
\end{theorem}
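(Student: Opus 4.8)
\emph{Proof strategy.} The plan is to reproduce, mutatis mutandis, the two-step scheme used to prove the original main theorem in Section 4, with $\omega_{SKE}$, $h_{SKE}$, $\Omega'$, the associated $\omega_{WP,\lambda}$, $G'$ and $\rho_B$ replacing $\omega_{SPR}$, $h_L$, $\Omega$, etc. Everything involved is smooth on $B\backslash S'$ (resp.\ $X\backslash S$), so all the $i\d\dbar$-manipulations below are legitimate there; the delicate point — that $0<\delta\le G'\in L^{1+\e}(B,\eta)$, hence that a solution $\rho_B\in\PSH(B,\eta)\cap C^0(B)\cap C^\infty(B\backslash S')$ exists by Remark \ref{base_space_CMA_remark} — has already been checked in this section using H\"older's inequality, the $L^p$-integrability of $e^{-\lambda\rho_{SKE}}$, and the small-order logarithmic pole of $\rho_{SKE}$. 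So this argument is pure bookkeeping of Ricci identities.

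First I would differentiate the complex Monge-Amp\`ere equation $(\eta+i\d\dbar\rho_B)^m=G'e^{\rho_B}\eta^m$ defining $\omega_B$. Taking $-i\d\dbar\log$ of both sides and using $i\d\dbar\rho_B=\omega_B-\eta$ yields
\[
    \Ric\omega_B = -i\d\dbar\log G' - \omega_B + \eta + \Ric\eta
\]
on $B\backslash S'$. The entire content is therefore to identify $i\d\dbar\log G'$, i.e.\ to establish the analogue of Proposition \ref{WPL_FS_equation} in the $\omega_{SKE}$ setting.

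Second I would compute $i\d\dbar\log G'$ by pulling back to $X\backslash S$, where $f$ is a submersion and $G=f^*G'$ holds by the construction of $G$. There $G=\Omega'/\big(\binom{n}{m}\omega_{SKE}^{n-m}\wedge f^*\eta^m\big)$, so
\[
    f^*i\d\dbar\log G' = i\d\dbar\log G = -\Ric\Omega' + \Ric\big(\omega_{SKE}^{n-m}\wedge f^*\eta^m\big).
\]
For the first term I use the defining relation $f^*\eta=e^{-T}\omega_{SKE}-(1-e^{-T})\Ric\Omega'$ (valid because $\Omega'=e^{-\lambda\rho_{SKE}}\Omega$ and $\omega_{SKE}=\omega_0+i\d\dbar\rho_{SKE}$), so that $\Ric\Omega'=\lambda\omega_{SKE}-\tfrac{1}{1-e^{-T}}f^*\eta$; for the second term I apply the $\omega_{SKE}$-version of Proposition \ref{ric_of_fibration_volume_form} stated just above with $\theta=\eta$, so that $\Ric(\omega_{SKE}^{n-m}\wedge f^*\eta^m)=\lambda\omega_{SKE}+f^*\Ric\eta-f^*\omega_{WP,\lambda}$. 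The two $\lambda\omega_{SKE}$ terms cancel — this is precisely why $\omega_{SKE}$, not $\omega_0$, must appear in the modified Proposition — leaving $f^*i\d\dbar\log G'=\tfrac{1}{1-e^{-T}}f^*\eta+f^*\Ric\eta-f^*\omega_{WP,\lambda}$. Since $f$ is a submersion on $X\backslash S$, $f^*$ is injective on $(1,1)$-forms pulled back from the base, so $i\d\dbar\log G'=\tfrac{1}{1-e^{-T}}\eta+\Ric\eta-\omega_{WP,\lambda}$ on $B\backslash S'$.

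Finally, substituting this into the first display and using $1-\tfrac{1}{1-e^{-T}}=-\lambda$ gives $\Ric\omega_B=-\omega_B-\lambda\eta+\omega_{WP,\lambda}$, as claimed. I expect no genuine obstacle in the computation; the only subtlety is the one already resolved elsewhere in the section, namely arranging $\Omega'$ and $h_{SKE}$ so that the fibrewise Ricci curvatures match (yielding $\Ric\Omega_{\Psi_b}=\lambda\omega_{SKE,b}$ and the relation $f^*\eta=e^{-T}\omega_{SKE}-(1-e^{-T})\Ric\Omega'$) and verifying the integrability properties of $G'$ needed to invoke Remark \ref{base_space_CMA_remark}.
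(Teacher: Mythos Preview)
Your proposal is correct and follows exactly the approach the paper intends: the paper does not spell out a separate proof for this theorem in Section 6 but rather indicates that the argument of Section 4 carries over verbatim once one replaces Proposition \ref{ric_of_fibration_volume_form} by its $\omega_{SKE}$-analogue, and you have filled in precisely that two-step bookkeeping (differentiate the CMA, then identify $i\d\dbar\log G'$ via pullback, the defining relation for $\Omega'$, and the modified proposition). The cancellation of the $\lambda\omega_{SKE}$ terms and the final simplification $1-\tfrac{1}{1-e^{-T}}=-\lambda$ are both handled correctly.
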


\noindent Additionally, the volume form $\Omega_{\omega_{SKE}, \omega_B}$ satisfies
\begin{equation}\label{base_metric_3_volume_form_ric_curvature}
    f^*\omega_B = e^{-T} \omega_{SKE} - (1-e^{-T})\Ric(e^{\lambda f^*\rho_B} \Omega_{\omega_{SKE}, \omega_B})
\end{equation}
on $X \backslash S$ where the $e^{\lambda f^*\rho_B}$ term vanishes if and only if $f^*\omega_B = f^*\eta$, or if $f^*\rho_B = C$.\\ 

\noindent Finally, we can consider the complex Monge-Ampere equation
\begin{equation*}
    \left(\frac{1}{1-e^{-T}} \eta + i\d\dbar \rho_B'\right)^m =  G'e^{\rho_B'} \left(\frac{1}{1-e^{-T}}\eta\right)^m,
\end{equation*}
which has a unique solution $\rho_B' \in \PSH\left(B,\frac{1}{1-e^{-T}}\eta\right) \cap C^0(B) \cap C^\infty(B\backslash S')$ such that
\begin{equation*}
    \omega'_B = \frac{1}{1-e^{-T}}\eta + i\d\dbar \rho_B'
\end{equation*}
is a smooth K\"ahler metric on $B\backslash S'$ since $0 < \delta \leq G' \in L^{1+ \e}\left(B, \frac{1}{1-e^{-T}}\eta\right)$ for some $\delta, \e > 0$. This metric satsifies the twisted K\"ahler-Einstein equation,
\begin{theorem}
We have the following on $B \backslash S'$,
\begin{equation*}
    \Ric \omega'_B = -\omega'_B + \omega_{WP,\lambda}.
\end{equation*}
\end{theorem}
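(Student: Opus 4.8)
The plan is to mirror, with $\omega_{SKE}$ in place of $\omega_{SPR}$ and $\Omega'$ in place of $\Omega$, the computation from the previous section that produced the equation $\Ric\omega_B' = -\omega_B' + \omega_{WP,\lambda}$ for the semi-prescribed Ricci curvature form. First I would apply $-i\d\dbar\log(\cdot)$ to both sides of the complex Monge-Amp\`ere equation defining $\omega_B'$; since the constant factor $\left(\frac{1}{1-e^{-T}}\right)^m$ is annihilated, this yields
\[
    \Ric\omega_B' = -i\d\dbar\log G' - i\d\dbar\rho_B' + \Ric\eta
\]
on $B\backslash S'$. Substituting $i\d\dbar\rho_B' = \omega_B' - \frac{1}{1-e^{-T}}\eta$ then reduces the whole statement to computing $-i\d\dbar\log G'$.

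The key step is therefore to establish the semi-K\"ahler-Einstein analogue of Proposition \ref{WPL_FS_equation}, namely
\[
    i\d\dbar\log G' = \frac{1}{1-e^{-T}}\eta + \Ric\eta - \omega_{WP,\lambda}
\]
on $B\backslash S'$. I would prove this upstairs on $X\backslash S$, where $G = f^*G'$. From the relation $f^*\eta = e^{-T}\omega_{SKE} - (1-e^{-T})\Ric\Omega'$ defining $\Omega'$, together with $\lambda = e^{-T}/(1-e^{-T})$, one obtains $i\d\dbar\log\Omega' = -\Ric\Omega' = -\lambda\omega_{SKE} + \frac{1}{1-e^{-T}}f^*\eta$. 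The $\omega_{SKE}$-analogue of Proposition \ref{ric_of_fibration_volume_form} established above, applied with $\theta = \eta$, gives $i\d\dbar\log\left(\omega_{SKE}^{n-m}\wedge f^*\eta^m\right) = -\lambda\omega_{SKE} - f^*\Ric\eta + f^*\omega_{WP,\lambda}$. Subtracting these, and recalling $G = \Omega'/\left({n\choose m}\omega_{SKE}^{n-m}\wedge f^*\eta^m\right)$, the $\lambda\omega_{SKE}$ terms cancel and I arrive at $i\d\dbar\log G = \frac{1}{1-e^{-T}}f^*\eta + f^*\Ric\eta - f^*\omega_{WP,\lambda}$. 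Since $f$ restricts to a submersion on $X\backslash S$ and the right-hand side is a pullback from $B\backslash S'$, the identity descends to the asserted formula for $G'$.

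Combining the two displays, the $\frac{1}{1-e^{-T}}\eta$ and $\Ric\eta$ contributions cancel and one is left with $\Ric\omega_B' = -\omega_B' + \omega_{WP,\lambda}$ on $B\backslash S'$. I do not anticipate a genuine obstacle here; the only two points demanding care are: (i) verifying that the $\lambda\omega_{SKE}$ terms really do cancel — this is precisely where the modified choice of $\Omega'$ matters, since it is engineered so that $e^{-T}\omega_{SKE}$, rather than $e^{-T}\omega_0$, appears in the relation defining it, which is exactly what is needed for the cancellation; and (ii) justifying that the identity of $(1,1)$-forms on $X\backslash S$ descends to $B\backslash S'$, which is immediate from the injectivity of $f^*$ on forms for a submersion. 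All regularity of $\rho_B'$ on $B\backslash S'$ is already furnished by the construction via Remark \ref{base_space_CMA_remark}.
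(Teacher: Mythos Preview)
Your proposal is correct and follows precisely the approach the paper intends: the paper does not write out a proof for this theorem but indicates that one repeats the Section~4 computation with $\omega_{SKE}$, $\Omega'$ and the $\omega_{SKE}$-version of Proposition~\ref{ric_of_fibration_volume_form} in place of their $\omega_{SPR}$ counterparts, and you have carried out exactly that computation with the correct cancellations.
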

\noindent And the volume form $\Omega_{\omega_{SKE},\omega_B'}$ always satisfies
\begin{equation}\label{base_metric_4_volume_form_ric_curvature} 
    (1-e^{-T})f^*\omega'_B = e^{-T}\omega_{SKE} - (1-e^{-T})\Ric \Omega_{\omega_{SKE},\omega'_B}
\end{equation}
on $X \backslash S$.

\end{document}